\pgfplotsset{compat=1.14}
\theoremstyle{plain}
\newtheorem{thm}{Theorem}
\newtheorem{prop}[thm]{Proposition}
\newtheorem{lemma}[thm]{Lemma}
\newtheorem{defn}[thm]{Definition}
\newtheorem{cor}[thm]{Corollary}
\newtheorem{obser}[thm]{Observation}
\newtheorem{question}[thm]{Question}
\newtheorem*{thmbezcisla}{Theorem}
\newtheorem{claim}{Claim}
\theoremstyle{remark}
\newtheorem*{rem}{Remark}
\newtheorem*{notat}{Notation}
\newcommand{\R}{\mathbb{R}}
\newcommand{\N}{\mathbb{N}}
\DeclareMathOperator{\cl}{Cl}
\let\int\relax
\DeclareMathOperator{\int}{Int}
\DeclareMathOperator{\diam}{diam}
\DeclareMathOperator{\bd}{\partial}
\newcommand{\F}{\mathcal{F}}
\newcommand{\lifted}[1]{\overline{#1}}
\newcommand{\periodic}[1]{\bar{#1}}
\def\keywords#1{\par\medskip
\noindent\textbf{Keywords.} #1}
\begin{document}
\makeatletter

\title{Topological fractals revisited} 
\author{Klára Karasová
\footnote{Orcid: 0000-0002-9019-443X.
The paper was supported by Charles University project PRIMUS/21/SCI/014 and by the grant GA UK No. 129024.}
\ and
Benjamin Vejnar
\footnote{Orcid: 0000-0002-2833-5385. 
This paper was supported by the grant GACR 24-10705S.}
\\ \\
Faculty of Mathematics and Physics \\
Charles University \\
Prague, Czechia\\ \\
}

\maketitle

\begin{abstract}
We prove that every Peano continuum with uncountably many local cut points is a topological fractal. This extends some recent results and gives a partial answer to a conjecture by Hata. We also discuss the number of maps which are sufficient for witnessing the structure of a topological fractal.
\end{abstract}

\textbf{Mathematics Subject Classification (2020):} primary: 37B45, secondary: 28A80, 47H09, 51F99, 54C05, 54D05, 54D30, 54F15
\keywords{Topological fractal, Peano continuum, local cut point}

\section{Introduction}

In this paper we are going to deal with so called topological fractals. Nevertheless, to motivate the notion, we start with the attractors of IFS.
Let $(X, d)$ be a metric space. A map $f:X\to X$ is called a \emph{contraction} if there is a constant $c<1$ such that $d(f(x), f(y))< c\cdot d(x,y)$ for $x\neq y$. In the case $c=1$ we are talking about a \emph{weak contraction}.
A set of finitely many contractions $\{f_1,\,f_2,\,\dots,\,f_n\}$ on a fixed complete metric space $X$ is called an \emph{iterated function system} (IFS). As an application of the Banach fixed point theorem, Hutchinson observed in 1981 (see \cite{Hutchinson} or \cite[Theorem 4.1.3]{fractalgeometry}) that for every IFS with $X$ complete there exists a unique nonempty compact set $K\subseteq X$ for which 
\begin{align}\label{pokryvani}
    K=f_1(K)\cup\dots\cup f_n(K).
\end{align}
Every $K$ which can be obtained in this way will be called a \emph{metric fractal}.
The most basic examples of metric fractals are the interval $[0,1]$, the Cantor set or the Sierpiński triangle. The case when the IFS consists of similarities in an Euclidean space is of special interest. The circle is an example of a metric fractal without a witnessing IFS consisting of similarities.

It was proven in \cite{fractalgeometry} that every metric fractal is of a finite topological dimension and thus there are continua which are not metric fractals.
It was noted by Hata \cite{Hata} that every connected metric fractal is a Peano continuum (see Theorem \ref{coverings} for more details). In the same paper he asked whether every finite--dimensional Peano continuum is a metric fractal. This question was answered negatively, in fact there exists a plane 1--dimensional Peano continuum which is not even homeomorphic to any IFS attractor \cite{onedimensional}.

Further generalizations of metric fractals are naturally obtained by considering only weak contractions instead of contractions. In this way we get the notion of a \emph{weak IFS attractor}.
It was shown in \cite{weakattractor} that metrizable weak attractors are exactly so called \emph{topological fractals}:

\begin{notat}
Let $X$ be a set and $\F$ be a set of self-maps of $X$. We denote $\F^0:= \{id\}$ and for any $n>0$, we denote by $\F^n$ the set of maps $\{f_1\circ\dots\circ f_n;\,f_i \in \F \text{ for }1\leq i \leq n\}$.
\end{notat}

\begin{defn}
Let $X$ be a metric space and $\F$ be a set of self-maps of $X$. We say that $\F$ is topologically contractive if for every $\varepsilon >0$ there exists $k \in \N$ such that $\diam f(X)<\varepsilon$ for every $f \in \F^k$.
\end{defn}

Note that the notion of a topological contractivity can be easily reformulated using open covers instead of the metric, and thus this property only depends on the topology of the space.

\begin{defn}
Let $X$ be a compact metric space. We say that $X$ is a topological fractal if there exists a topologically contractive finite set $\F$ of continuous self-maps of $X$ satisfying $ \bigcup\{f(X);f\in \F\}=X$.
In that case, the maps in $\F$ are called witnessing maps and the smallest cardinality of a family $\mathcal F$ of witnessing maps is called the witnessing number.
\end{defn}

Similarly to the case of metric fractals, every connected topological fractal has the property S and thus is a Peano continuum by Theorem \ref{coverings}. Thus among continua, only Peano continua might happen to be topological fractals. However the question, whether every Peano continuum is a topological fractal, is still open since 1985 when Hata posted it in \cite{Hata}.
This question is also mentioned in the Lviv Scottish Book (page 3, Volume 0).

Nevertheless, some sufficient conditions under which a Peano continuum is a topological fractal have been found in the meantime. To begin, every Peano continuum containing a free arc, i.e. containing an arc as a subset with nonempty interior, is a topological fractal \cite{freearc1, freearc2}. 
As a consequence of this result, the space $[0,1]^\N \cup [0,1]$ with the points $(0,0,\dots)$ and $0$ identified, is an infinite--dimensional connected topological fractal. Thus this space is in particular a topological fractal that is not a metric fractal. 

However, Peano continua are topological fractals under much weaker conditions than containing a free arc. It is proved in \cite{regenerating} that every Peano continuum containing a self-regenerating subcontinuum with a nonempty interior is a topological fractal itself. The paper contains some examples of self-regenerating continua, including the arc.
In this paper we present a new sufficent condition under which a Peano continuum is a topological fractal. Namely we prove in Theorem \ref{thmlocalcutpoint}:

\begin{thmbezcisla}
Every Peano continuum with uncountably many local cut points is a topological fractal.
\end{thmbezcisla}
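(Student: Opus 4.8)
The plan is to convert the hypothesis ``uncountably many local cut points'' into a genuinely \emph{multi-scale}, self-similar skeleton, and then to propagate an interval-type contraction along that skeleton over all of $X$. The first, and more routine, step is to pass from ``uncountably many'' to ``present at arbitrarily small scales.'' Since $X$ is compact metric, all but countably many points of the set $L$ of local cut points are condensation points of $L$, so I may fix $p^*$ such that every neighbourhood of $p^*$ meets $L$ in an uncountable set. Using that $X$ is locally arcwise connected and that each local cut point is interior to, and locally separates, some arc through it, I would build by recursion a Cantor scheme of local cut points $\{p_s : s \in 2^{<\omega}\}$ together with nested connected open neighbourhoods $U_s$, where $p_s \in U_s$, the sets $U_{s0}, U_{s1} \subseteq U_s$ are disjoint, $p_s$ locally separates $U_s$, and $\diam U_s \to 0$ along every branch. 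This produces a topological Cantor set $K \subseteq L$ equipped with separating neighbourhoods that nest at all scales: a nowhere-dense skeleton meant to play the structural role of (the interior of) a free arc.

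The second step is to manufacture the witnessing maps from this cut structure. Modelling the construction on the interval case $f_1(x)=x/2$, $f_2(x)=x/2+1/2$ and on the existing free-arc and self-regenerating constructions \cite{freearc1, freearc2, regenerating}, I would use each splitting $U_s \setminus \{p_s\}$ to define a retraction collapsing one side of the cut onto $p_s$, so that each branch of the Cantor scheme corresponds to a self-map that carries $X$ onto a piece and pushes the skeleton one level deeper. To promote these partial, skeleton-defined maps to honest continuous self-maps of all of $X$, and to guarantee that their images cover $X$, I would fix a convex metric on the Peano continuum (Bing--Moise) and contract the ``bulk'' of $X$ into the neighbourhoods $U_s$ along geodesics, invoking local connectedness and Property~S (Theorem~\ref{coverings}) to decompose $X$ at each stage into finitely many small connected pieces funnelled toward the skeleton.

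The last step is to verify the two defining properties and to isolate the real difficulty. The union of the images should equal $X$ because the finitely many top-level geodesic contractions are arranged so that every point of $X$ is pushed into some $U_s$ and the pieces exhaust $X$; topological contractivity should hold because composing $k$ witnessing maps drives the skeleton $k$ levels deeper into the Cantor scheme, where $\diam U_s \to 0$, while the geodesic contractions simultaneously shrink the transverse directions. The main obstacle, and the point where the hypothesis must genuinely be spent, is the second step: producing finitely many continuous self-maps of the \emph{whole} space whose images cover $X$ and whose iterates uniformly shrink, starting from a skeleton of empty interior. A free arc and a self-regenerating subcontinuum both have nonempty interior, which is exactly what makes covering automatic in \cite{freearc1, freearc2, regenerating}; here the Cantor system of cut points is typically nowhere dense, so the crux is to show that these thin local separations are nonetheless ``thick enough'' to organise a global covering that contracts at every scale.
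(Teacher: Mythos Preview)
Your outline has a genuine gap at its most critical step, and the approach diverges from the paper's in a way that looks unsalvageable as stated.

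The first step---extracting a Cantor set $K$ of local cut points with a nested scheme of separating neighbourhoods---is reasonable and close in spirit to what the paper does (the paper additionally invokes Whyburn's theorem \cite{Whyburn} to discard the countably many points of Menger order $\geq 3$, so that every point of $K$ has order exactly two; you would need this too). The trouble is Step~2.

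You propose to build the witnessing maps by (i) locally retracting one side of $U_s\setminus\{p_s\}$ onto $p_s$, and (ii) extending to all of $X$ by geodesically contracting the bulk of $X$ into $U_s$. But any self-map constructed this way has image contained in a subset of $U_s$. Finitely many such maps then have images contained in finitely many small neighbourhoods of points of the nowhere-dense set $K$, and these cannot cover $X$. This is exactly the obstacle you flag in your final paragraph, and nothing in the construction addresses it: contracting \emph{toward} the skeleton produces small images, not images whose union is $X$. The interval model $f_1(x)=x/2$, $f_2(x)=(x+1)/2$ works precisely because each half of $[0,1]$ has nonempty interior; your $U_s$'s do not.

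The paper resolves this by going in the opposite direction. Rather than pushing $X$ toward $K$, it uses $K$ to \emph{decompose} $X$ into Peano pieces whose union is all of $X$. When there are uncountably many global cut points, Proposition~\ref{structureprop} yields $X=L\cup R\cup C\cup\bigcup_s X_s$ with each piece a Peano continuum with two-point boundary; in the purely local case, Lemma~\ref{LemmaMonotoneMapOntoCircle} collapses the analogous pieces to produce a monotone surjection $p:X\to S$ onto a circle. The witnessing maps are then defined \emph{piecewise} on this decomposition, using Hahn--Mazurkiewicz (Theorem~\ref{coverings}(\ref{peano6})) to send each piece surjectively onto a prescribed target piece, respectively by lifting three carefully chosen contractions of $S$ through $p$. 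Covering is automatic because the pieces already cover $X$; contractivity comes from shifting pieces deeper into the index tree, where Lemma~\ref{diameters} forces diameters to zero. No convex metric or geodesic contraction appears anywhere.

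In short, the missing idea is that the Cantor set of (local) cut points should be used to \emph{cut $X$ into pieces} and then shuffle those pieces via Hahn--Mazurkiewicz, not as a target to contract onto.
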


This Theorem strengthens the results that every Peano continuum containing a free arc is a topological fractal \cite{freearc1, freearc2}, as all points in the interior of a free arc (the interior is taken with respect to the whole space) are local cut points or even cut points of the whole space. Yet, the Theorem is independent of the result of M. Nowak \cite{regenerating}. While she works with sets that have nonempty interior, the uncountable set of local cut points might be nowhere dense.

Beside the sufficient condition under which a Peano continuum is a topological fractal, we obtain that the number three is an upper bound on the witnessing number for Peano continua with uncountably many local cut points. In the case of Peano continua with uncountably many cut points we further reduce the upper bound to two, which is optimal.

\section{Preliminaries} 

Let $X$ be a metrizable space. When $A$ is a subset of $X$, we denote by $\int(A)$ the interior of $A$, by $\cl(A)$ the closure of $A$ and by $\bd(A)$ the boundary of $A$. We write $\diam(A)$ for the diameter of $A$ if some metric giving the topology on $X$ is fixed. We denote positive integers by $\N$ a non-negative integers by $\omega$. We use $\N$ when working with individual sequences, and $\omega$ when working with sets of sequences.

We say that $X$ (or a subset of $X$) is \emph{connected}, if it cannot be written as a union of two proper disjoint open sets. Maximal connected sets are called (connected) \emph{components}. These sets are always closed and they form a partition of $X$, thus in particular $X$ is connected if and only if it consists of a single connected component, namely $X$.

Let $X$ be a (connected) topological space. A point $x \in X$ is called a \emph{cut point} if $X \setminus \{x\}$ is not connected. We say that $x \in X$ is a \emph{local cut point} of $X$ if there exists $U$ a connected neighborhood of $x$ such that $U \setminus \{x\}$ is not connected (i.e. $x$ is a cut point of $U$).
Clearly every cut point of a connected space is also a local cut point of that space.

A \emph{continuum} is a nonempty connected compact metrizable space. For the fundamentals of continuum theory we refer to the book by S. Nadler \cite{nadler}.
We say that a space $X$ is \emph{locally connected}, if the set of all connected open subsets of $X$ forms a basis of the topology of $X$. A locally connected continuum is called \emph{a Peano continuum}. We say that a metric space $X$ has the property $S$ if for every $\varepsilon>0$ there exists a finite cover of $X$ by connected sets with diameter less than $\varepsilon$.
In what follows, we recall several properties of Peano continua. For more details on Peano continua see \cite[Chapter 8]{nadler}.

\begin{thm}[{\cite[Theorems 8.4, 8.18, 8.19]{nadler}}] \label{coverings}
	Let $(X, d)$ be a metric space. Then the following conditions are equivalent:
	\begin{enumerate}[noitemsep]
		\item\label{peano1} $X$ is a Peano continuum.
		\item\label{peano2} $X$ is a continuum with property $S$.
		\item\label{peano5} There exists a continuous onto map $f:I\to X$.
		\item\label{peano6} For every Peano continuum $Y$, $a \ne b \in Y$ and $c, d \in X$ there exists $f:Y \to X$ continuous, onto and satisfying $f(a) = c$, $f(b) = d$.
	\end{enumerate}
\end{thm}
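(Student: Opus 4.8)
\begin{myproof}
The plan is to establish $\ref{peano1}\Rightarrow\ref{peano2}\Rightarrow\ref{peano1}$ (so that $\ref{peano1}\Leftrightarrow\ref{peano2}$), together with $\ref{peano2}\Leftrightarrow\ref{peano5}$ and $\ref{peano5}\Leftrightarrow\ref{peano6}$; chaining these gives the full equivalence. Three of the six implications are routine compactness arguments, the implication $\ref{peano2}\Rightarrow\ref{peano5}$ is the Hahn--Mazurkiewicz theorem and is the heart of the matter, and $\ref{peano5}\Rightarrow\ref{peano6}$ is a short factorisation argument. I expect the Hahn--Mazurkiewicz direction to be by far the main obstacle.

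For the easy directions: for $\ref{peano1}\Rightarrow\ref{peano2}$, given $\varepsilon>0$ I would cover $X$ by connected open sets of diameter $<\varepsilon$ (local connectedness) and extract a finite subcover (compactness), which is a witnessing family for property $S$. For $\ref{peano5}\Rightarrow\ref{peano2}$, if $f\colon I\to X$ is continuous and onto then $X$ is a continuum (continuous image of one), and by uniform continuity there is $\delta>0$ with $\diam f(J)<\varepsilon$ whenever $\diam J<\delta$; partitioning $I$ into subintervals $J$ of length $<\delta$ yields finitely many connected sets $f(J)$ of diameter $<\varepsilon$ covering $X$, so $X$ has property $S$. For $\ref{peano2}\Rightarrow\ref{peano1}$, I would first upgrade property $S$ to \emph{uniform local connectedness}: for every $\varepsilon>0$ there is $\delta>0$ such that any two points at distance $<\delta$ lie in a common connected subset of diameter $<\varepsilon$. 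To see this, take a finite cover by connected sets of diameter $<\varepsilon/2$ and replace each by its closure (still connected, of the same diameter); letting $\delta$ be the minimum of the finitely many positive distances between disjoint members of this new cover, any two points within $\delta$ must lie in two members whose closures meet, whose union is then connected, of diameter $<\varepsilon$, and contains both. Uniform local connectedness immediately yields local connectedness, since the union of all the small connecting sets joining $x$ to points of $B(x,\delta)$ is a connected neighbourhood of $x$ contained in $B(x,\varepsilon)$.

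The crux is $\ref{peano2}\Rightarrow\ref{peano5}$. Since $X$ is a compact metric space it is a continuous image $g\colon C\to X$ of the Cantor set $C\subseteq I$. Writing $I\setminus C=\bigcup_n(a_n,b_n)$ with $a_n,b_n\in C$, I would extend $g$ to $f\colon I\to X$ by mapping each gap $[a_n,b_n]$ along a path in $X$ from $g(a_n)$ to $g(b_n)$. For this to yield a continuous (hence, since $g$ is already onto, automatically surjective) map, two things are needed: that such paths exist, and that their diameters tend to $0$. The key ingredient is an upgrade of the uniform local connectedness above to uniform local \emph{path}-connectedness, i.e.\ nearby points are joined by \emph{short} paths; this uses that $X$ is by now a Peano continuum and hence arcwise connected to supply paths, with the diameter bound coming from property $S$ (alternatively, the short paths can be built directly by iterating uniform local connectedness over geometrically decreasing scales). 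Granting this, as $b_n-a_n\to0$ the endpoints $g(a_n),g(b_n)$ get close by uniform continuity of $g$, so the connecting paths can be chosen with diameter $\to0$, which forces continuity of $f$ at every point of $C$. This gap-filling construction, and in particular the short-path lemma, is where essentially all the difficulty lies.

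Finally, $\ref{peano6}\Rightarrow\ref{peano5}$ is immediate by taking $Y=I$, $a=0$, $b=1$ and arbitrary $c,d$. For $\ref{peano5}\Rightarrow\ref{peano6}$, given a Peano continuum $Y$, points $a\ne b$ in $Y$ and $c,d\in X$, I would factor the desired map through $I$. A Urysohn function together with connectedness of $Y$ gives a continuous onto map $p\colon Y\to I$ with $p(a)=0$ and $p(b)=1$, its image being a connected subset of $I$ that contains $0$ and $1$. From $\ref{peano5}$ and the arcwise connectedness of the Peano continuum $X$ one obtains a continuous onto map $h\colon I\to X$ with $h(0)=c$ and $h(1)=d$: take any surjection $\varphi\colon I\to X$ from $\ref{peano5}$ and concatenate it with an arc in $X$ from $c$ to $\varphi(0)$ at the front and an arc from $\varphi(1)$ to $d$ at the end, reparametrising the domain back to $I$. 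Then $f=h\circ p\colon Y\to X$ is continuous and onto with $f(a)=c$ and $f(b)=d$, as required.
\end{myproof}
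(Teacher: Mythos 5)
The paper does not prove this theorem at all: it is imported from Nadler's book (Theorems 8.4, 8.18, 8.19) as background, so there is no in-paper argument to compare against, and your sketch has to be measured against the classical proofs --- which it matches. Your decomposition ($(1)\Leftrightarrow(2)$, $(2)\Leftrightarrow(3)$ with Hahn--Mazurkiewicz as the crux, $(3)\Leftrightarrow(4)$) is the standard one; the compactness directions are fine; the Cantor-set gap-filling argument for $(2)\Rightarrow(3)$, including the observation that only finitely many gaps can receive a path of diameter above any fixed $\varepsilon$, is the classical route found in standard references; and the factorisation through $I$ in $(3)\Rightarrow(4)$ is correct. In that last step you could even avoid quoting the arcwise connectedness theorem: if $\varphi:I\to X$ is the surjection from $(3)$ and $\varphi(s)=c$, $\varphi(t)=d$, then the restrictions $\varphi|_{[0,s]}$ and $\varphi|_{[t,1]}$ (suitably reversed and reparametrised) already supply the connecting paths.

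Two places where you compress genuinely nontrivial content. First, in $(2)\Rightarrow(1)$ the small connected neighbourhoods you build are not open, so what you actually establish is that $X$ is connected im kleinen at every point; passing from that to local connectedness requires the standard lemma that connectedness im kleinen at all points makes components of open sets open --- routine, but it is a lemma, not an immediacy as your wording suggests. Second, and more seriously, the short-path lemma in $(2)\Rightarrow(3)$ is the entire content of Hahn--Mazurkiewicz, and your primary justification (``$X$ is by now a Peano continuum and hence arcwise connected'') risks circularity unless arcwise connectedness is quoted as an independent prior theorem (it is, in Nadler, proved by a nested-chain construction that does not pass through $(3)$); moreover arcwise connectedness alone gives paths with no diameter control, so the uniform smallness genuinely needs the iterated-chains-at-decreasing-scales construction you mention only parenthetically, together with a limit argument to extract an actual path from the nested chains. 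As a blind proposal this is a correct outline with the hard kernel honestly flagged rather than proved, which is the appropriate standard here given that the paper itself delegates the proof entirely to the literature.
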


An \emph{arc} in a space $X$ is a continuous one--to--one map $f:I \to X$, respectively its image. If $f(0)=x$ and $f(1)=y$ we say that $f$ is an arc from $x$ to $y$. We say that $S \subseteq X$ is arcwise connected if for every $x \ne y\in S$ there exists an arc in $S$ from $x$ to $y$.

\begin{thm}\cite[Theorem 8.26]{nadler} \label{arcwiseconnected}
In Peano continua,  every open and connected set is arcwise connected.
\end{thm}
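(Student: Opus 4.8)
The plan is to fix a connected open set $U \subseteq X$ in the Peano continuum $X$ together with two points $a, b \in U$, and to produce an arc from $a$ to $b$ lying entirely inside $U$. I would organize the argument around the \emph{arc-components} of $U$: declare $x \sim y$ when $x = y$ or there is an arc in $U$ joining $x$ and $y$. This is an equivalence relation, the only nontrivial point being transitivity; there, if arcs join $x$ to $y$ and $y$ to $z$ inside $U$, then their concatenation is a path from $x$ to $z$ inside $U$, and the standard fact about Hausdorff spaces that a path joining two distinct points contains an arc joining them upgrades this path to an arc. Thus $U$ is partitioned into arc-components, and it suffices to show that these classes are open: an open partition of the connected set $U$ forces a single class (each class is clopen, being open with open complement), whence $a \sim b$.

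The whole difficulty is therefore concentrated in one local statement, namely that $X$ is \emph{locally arcwise connected}: every point $x$ has arbitrarily small open neighborhoods $W$ in which every point is joined to $x$ by an arc. Granting this, for $x \in U$ I would choose such a $W$ with $W \subseteq U$ (possible since $U$ is open), so that every $y \in W$ satisfies $y \sim x$; hence $W$ lies in the arc-component of $x$ and the classes are open, as required. I would also record that $U$ inherits local connectedness from $X$ and, being open in a compact metric space, is locally compact and completely metrizable — the ambient regularity that makes the local construction feasible.

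The main obstacle is exactly the construction of these short arcs witnessing local arcwise connectedness, and I see two routes. The clean route invokes the Bing--Moise theorem that every Peano continuum carries a \emph{convex} (geodesic) metric $d$, under which any two points $x,y$ are joined by a metric segment whose every point $z$ satisfies $d(x,z) \le d(x,y)$. Then for $x \in U$ I pick $r>0$ with $B(x,r) \subseteq U$; for any $y \in B(x,r)$ the joining segment stays within $B(x,r) \subseteq U$, yielding the desired arc and local arcwise connectedness at once. The elementary route, avoiding this heavy tool, builds the arc directly as the limit of a nested sequence of simple chains of small connected open sets running from $x$ to $y$ inside $U$ with mesh tending to $0$ — local connectedness provides such chains and connectedness lets one extract them — and identifies the nested intersection of the chain bodies as an irreducible Peano subcontinuum between $x$ and $y$, which is then shown to be an arc. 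Verifying that this limit is genuinely homeomorphic to $[0,1]$ is the technical heart of the argument; everything surrounding it is bookkeeping.
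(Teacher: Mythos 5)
The paper offers no proof of this statement at all: it is quoted directly from Nadler's book \cite{nadler} (Theorem 8.26), so there is no in-paper argument to compare against. Your plan is essentially the standard textbook proof of that theorem and is sound: reduce to showing that the arc-components of the open connected set $U$ are open, conclude by connectedness of $U$ that there is only one, and concentrate all the work in local arcwise connectedness. Both routes you sketch for that local statement are legitimate. The Bing--Moise convexification theorem does give the short arcs immediately (a metric segment from $x$ to $y\in B(x,r)$ stays in $B(x,r)\subseteq U$, since every point $z$ of the segment satisfies $d(x,z)\le d(x,y)$), though it is far heavier machinery than the statement being proved; the nested simple-chain construction is the classical, self-contained route, and you correctly identify its technical core, namely verifying that the nested intersection of chain bodies is homeomorphic to $[0,1]$.

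One repair is needed in the transitivity step. You invoke the fact that a path between distinct points of a Hausdorff space contains an arc; but the usual proof of that fact observes that the image of the path is a Peano continuum and then quotes arcwise connectedness of Peano continua --- essentially the theorem you are proving --- so as written there is a risk of circularity (direct proofs of the path-to-arc lemma exist, but they are not elementary). The fix is easy and elementary: given arcs $A$ from $x$ to $y$ and $B$ from $y$ to $z$ inside $U$ with $x\ne z$, let $t^{*}=\min\{t\in[0,1]: A(t)\in B([0,1])\}$ (this set is closed and nonempty, as it contains the parameter of $y$), write $A(t^{*})=B(s^{*})$, and concatenate $A|_{[0,t^{*}]}$ with the subarc of $B$ from $B(s^{*})$ to $z$; by minimality of $t^{*}$ the two images meet only at $A(t^{*})$, so the concatenation is an arc from $x$ to $z$ in $U$ (if $t^{*}=0$, a subarc of $B$ alone suffices). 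With this substitution your argument is complete modulo the standard results you cite.
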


\begin{obser}
    \label{opencomponents}
    In Peano continua, components of open sets are open.
\end{obser}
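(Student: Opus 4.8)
The plan is to reduce the statement to the definition of local connectedness recalled in the preliminaries, namely that the connected open subsets of $X$ form a basis of the topology. Since a Peano continuum is in particular locally connected, this basis is available, and it is the only tool one needs.

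First I would fix an open set $U \subseteq X$ and let $C$ be one of its connected components. The goal is to show that $C$ is open, which amounts to verifying that every point of $C$ is interior to $C$ (with respect to $X$). So I would take an arbitrary point $x \in C \subseteq U$ and aim to produce an open neighborhood of $x$ contained in $C$.

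Since $U$ is open and contains $x$, and since the connected open sets form a basis, I would choose a connected open set $V$ with $x \in V \subseteq U$. The key step is then to observe that $V \subseteq C$: indeed $V$ is a connected subset of $U$ meeting the component $C$ at the point $x$, and since $C$ is a maximal connected subset of $U$, the union $V \cup C$ is a connected subset of $U$ containing $C$, forcing $V \cup C = C$ and hence $V \subseteq C$. This exhibits $x$ as an interior point of $C$ via the open set $V$, and as $x \in C$ was arbitrary, $C$ is open.

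I do not anticipate a genuine obstacle, since this is essentially the classical fact that components of open sets are open in a locally connected space; the only ingredient is the basis of connected open sets supplied by the definition of a Peano continuum. The single point that deserves a word of care is the maximality argument in the key step, where one uses that the union of two connected sets sharing a point is connected, so that the component $C$ absorbs the neighborhood $V$.
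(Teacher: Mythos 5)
Your argument is correct and is precisely the standard proof of this classical fact; the paper states the observation without proof, evidently relying on exactly this argument from local connectedness. The one step you flag for care --- that $V\cup C$ is connected because $V$ and $C$ share the point $x$, so maximality of the component forces $V\subseteq C$ --- is handled correctly.
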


Let us start with the following useful observation. A similar result was proved indepently in \cite{regenerating}. For the sake of completeness we include a simple proof.

\begin{obser} \label{fractalmaps}
	Let $X$ be a compact metric space, $m,n \in \N$ and $f_1,\,\dots, f_n,\,g_1,\,\dots,\,g_m: X \to X$ be continuous maps. Suppose that $\{f_1,\,\dots,\,f_n\}$ is topologically contractive and that for every $f \in \bigcup_{i=0}^{\infty} \{f_1,\,\dots,\,f_n\}^i$ and $1\leq i,\,j \leq m$ the map $g_i\circ f \circ g_j$ is constant.
	
	Then $\{f_1,\,\dots, f_n,\,g_1,\,\dots,\,g_m\}$ is topologically contractive.
\end{obser}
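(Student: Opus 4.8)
The plan is to fix $\varepsilon>0$ and bound $\diam w(X)$ for an arbitrary composition $w=h_1\circ\dots\circ h_k\in\F^k$, where $\F=\{f_1,\dots,f_n,g_1,\dots,g_m\}$, by splitting into cases according to how many of the letters $h_\ell$ belong to $\mathcal A:=\{f_1,\dots,f_n\}$ versus to $\{g_1,\dots,g_m\}$. Write $\mathcal A^{*}:=\bigcup_{i=0}^{\infty}\mathcal A^{i}$ for the set of all $f$-words (including the identity). Before the case analysis I would record a monotonicity remark: if $\diam h(X)<\eta$ for every $h\in\mathcal A^{\ell_0}$, then the same bound holds for every $h\in\mathcal A^{\ell}$ with $\ell\ge\ell_0$, since such an $h$ factors as $u\circ v$ with $u\in\mathcal A^{\ell_0}$ and $v\in\mathcal A^{\ell-\ell_0}$, whence $h(X)=u(v(X))\subseteq u(X)$ and so $\diam h(X)\le\diam u(X)$.

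The easy cases come first. If $w$ contains at least two $g$-letters, pick two of them that are consecutive among the $g$-letters, say at positions $p<q$; the block $h_p\circ\dots\circ h_q$ then has the form $g_i\circ f\circ g_j$ with $f\in\mathcal A^{*}$, which is constant by hypothesis, so $w$ (a composition having a constant factor) is itself constant and $\diam w(X)=0$. If $w$ contains no $g$-letter, then $w\in\mathcal A^{k}$, and topological contractivity of $\mathcal A$ together with the monotonicity remark handles it as soon as $k$ is large.

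The crux is a word with exactly one $g$-letter, $w=u\circ g_i\circ v$ with $u\in\mathcal A^{a}$, $v\in\mathcal A^{b}$ and $a+b+1=k$. Here I would first fix, by contractivity of $\mathcal A$, a length $k_1$ with $\diam h(X)<\varepsilon$ for all $h\in\mathcal A^{\ell}$, $\ell\ge k_1$; then use uniform continuity on the compact space $X$ of the finitely many maps $u\circ g_i$ with $u\in\mathcal A^{a}$, $a<k_1$, $1\le i\le m$, to obtain a single $\delta>0$ such that $\diam A<\delta$ implies $\diam (u\circ g_i)(A)<\varepsilon$ for all of them; and finally fix $k_2$ with $\diam h(X)<\delta$ for all $h\in\mathcal A^{\ell}$, $\ell\ge k_2$. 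Taking $k=k_1+k_2+1$ forces $a\ge k_1$ or $b\ge k_2$. If $a\ge k_1$, then $w(X)\subseteq u(X)$ gives $\diam w(X)<\varepsilon$ at once; if instead $a<k_1$ (so $b\ge k_2$), then $\diam v(X)<\delta$ and the uniform-continuity bound on $u\circ g_i$ yield $\diam w(X)=\diam (u\circ g_i)(v(X))<\varepsilon$.

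The main obstacle is precisely this single-$g$ subcase in which the $g$-letter sits near the outermost (leftmost) position, so that the contracting $f$-word $u$ standing after it is too short to contract on its own. The resolution is to exploit that there are then only finitely many possible outer maps $u\circ g_i$, and instead to make the inner $f$-word $v$ long, shrinking $v(X)$ enough that uniform continuity of those finitely many maps finishes the estimate. Everything else is bookkeeping, and letting $\varepsilon$ vary then yields topological contractivity of $\F$.
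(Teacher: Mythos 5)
Your proposal is correct and follows essentially the same route as the paper's proof: the same three-way case split on the number of $g$-letters, the same choice of $k_1$, then a uniform $\delta$ for the finitely many outer words $u\circ g_i$ with $|u|<k_1$, then $k_2$, with the total length forcing either the outer $f$-word or the inner $f$-word to be long enough. The only differences are cosmetic (you take $k=k_1+k_2+1$ rather than $k_1+k_2$, and you spell out the monotonicity remark that the paper uses implicitly).
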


\begin{proof}
Denote by $d$ the metric on $X$ and let $\varepsilon >0$. Then there exists $k_1 \in \N$ such that $\diam f(X)<\varepsilon$ for every $f \in \{f_1,\,\dots,\,f_n\}^{k_1}$. We can find $\delta >0$ such that for every $f \in \bigcup_{i=0}^{k_1-1} \{f_1,\,\dots,\,f_n\}^i$ and every $1\leq j \leq m$, if $x,y \in X$, $d(x,y)<\delta$, then $d(f(g_j(x)),f(g_j(y)))<\epsilon$. By assumption, there exists $k_2 \in \N$ such that $\diam f(X)<\delta$ for every $f \in \{f_1,\,\dots,\,f_n\}^{k_2}$.
	
Let $k:= k_1 + k_2$ and $h = h_1\circ\dots\circ h_k \in \{f_1,\,\dots, f_n,\,g_1,\,\dots,\,g_m\}^k$. If there are $1 \leq i\ne j\leq k$ such that $h_i,\,h_j \in \{g_1,\,\dots,\,g_m\}$, then $h$ must be by assumption a constant map and hence $\diam h(X) = 0 <\varepsilon$.
	
Suppose that $h_i \in \{g_1,\,\dots,\,g_m\}$ for at most one $1 \leq i \leq k$. If $h_1,\,\dots ,\,h_{k_1} \in \{f_1,\,\dots, f_n\}$, then $\diam h(X) < \varepsilon$ since $h(X) = h_1\circ\dots\circ h_k(X) \subseteq h_1\circ\dots\circ h_{k_1}(X)$ and $h_1\circ\dots\circ h_{k_1} \in \{f_1,\,\dots,\,f_n\}^{k_1}$. Otherwise $h_i \in \{g_1,\,\dots,\,g_m\}$ for exactly one $1\leq i \leq k_1$ and $h_{i+1},\,\dots,\, h_k \in \{f_1,\,\dots, f_n\}$. Then $\diam(h_{i+1}\circ\dots\circ h_k(X)) < \delta$ since $k - i\geq k -k_1 = k_2$.

Thus by the definition of $\delta$, $\diam h(X) \leq \varepsilon$.
\end{proof}

\begin{obser} \label{threetotwo}
Let $X$ be a metric space and $f,\,g: X \to X$ continuous self-maps of $X$. If $\{f,\,gf,\,gg\}$ is topologically contractive, then so is $\{f,\,g\}$.
\end{obser}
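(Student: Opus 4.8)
The plan is to verify the definition of topological contractivity for $\{f,g\}$ directly, deducing it from that of $\{f,gf,gg\}$ by a word-parsing argument. Fix $\varepsilon>0$. By hypothesis there is $m\in\N$ such that $\diam \phi(X)<\varepsilon$ for every $\phi\in\{f,gf,gg\}^m$. I claim $k:=2m$ works for $\{f,g\}$, that is, every composition of $k$ maps from $\{f,g\}$ has image of diameter less than $\varepsilon$.

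The key observation is that any word $h_1\circ\dots\circ h_k$ with each $h_i\in\{f,g\}$ can be parsed greedily from the left into the blocks $f$, $gf$, $gg$: reading the letters in order, whenever the current letter is $f$ I cut off the single block $f$ and advance by one, and whenever it is $g$ I cut off the two-letter block $g\circ h$, which is $gf$ or $gg$, and advance by two. Carrying this out until $m$ blocks have been produced yields $B_1,\dots,B_m\in\{f,gf,gg\}$. Since producing $m$ blocks requires reading at most $2m=k$ letters, all of which are present, the procedure never runs out of input, so the first $m$ blocks are always available. They consume a prefix $h_1\circ\dots\circ h_j$ with $m\le j\le k$, and by construction $h_1\circ\dots\circ h_j=B_1\circ\dots\circ B_m\in\{f,gf,gg\}^m$.

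Now I exploit that this guaranteed-contractive prefix sits on the outside of the whole composition. Writing $h_1\circ\dots\circ h_k=(h_1\circ\dots\circ h_j)\circ(h_{j+1}\circ\dots\circ h_k)$ and using that $(h_{j+1}\circ\dots\circ h_k)(X)\subseteq X$, I obtain $(h_1\circ\dots\circ h_k)(X)\subseteq(h_1\circ\dots\circ h_j)(X)=(B_1\circ\dots\circ B_m)(X)$, whose diameter is below $\varepsilon$ by the choice of $m$. Hence every element of $\{f,g\}^k$ has image of diameter less than $\varepsilon$, which is exactly topological contractivity of $\{f,g\}$.

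The routine parts are the verification that the greedy parse delivers $m$ blocks from a word of length $2m$ and the bookkeeping on the indices. The one point that must not be gotten backwards is the direction of composition: it is essential to read the block decomposition off the \emph{left} end of the word, so that the contractive factor $B_1\circ\dots\circ B_m$ is applied last (outermost). This is what lets me dominate the image of the entire word by the image of a single member of $\{f,gf,gg\}^m$; a contractive factor sitting on the inside would give no control, since an arbitrary continuous map applied afterwards may enlarge diameters. I expect this direction-of-composition issue to be the only real subtlety.
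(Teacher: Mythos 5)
Your proof is correct and follows essentially the same route as the paper's: the paper also takes $2k$ letters, regroups the word greedily from the left into blocks from $\{f,\,gf,\,gg\}$ (possibly discarding a suffix), and bounds the image of the whole word by the image of the outermost block product. Your added bookkeeping that the greedy parse never runs out of input is a welcome explicit check of a point the paper leaves implicit.
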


\begin{proof}
Let $\varepsilon>0$, then there exists $k \in \N$ such that $\diam h(X)<\varepsilon$ for every $h \in \{f,\,gf,\,gg\}^k$. Let $h' \in \{f,\,g\}^{2k}$ be arbitrary and notice that actually $h' = h\circ r$ for some $h \in \{f,\,gf,\,gg\}^k$ and $r: X \to X$. This is true since $h'$ is, by definition, a composition of $2k$ maps from $\{f,\,g\}$ and we may regroup these maps starting from left, creating a sequence of $k$ maps from $\{f,\,gf,\,gg\}$, possibly leaving out some maps in the end. For example:
    
\[ f\circ f\circ g\circ g \circ g \circ f \circ f \circ g \circ f\circ f\circ g\circ g \circ g \circ f \circ f \circ g = \]
\[= ((f)\circ (f)\circ (g\circ g) \circ (g \circ f) \circ (f) \circ (g \circ f)\circ (f)\circ (g\circ g)) \circ (g \circ f \circ f \circ g).\]

Thus $h'(X) = hr(X) \subseteq h(X)$ and $\diam h(X) <\varepsilon$.
\end{proof}

\begin{lemma}\label{diameters}
Let $X$ be a Peano continuum and let $K_n\subseteq X$, $n\in\mathbb N$, be pairwise disjoint subcontinua whose boundaries have at most 13 points. Then $\diam K_n$ converges to zero.
\end{lemma}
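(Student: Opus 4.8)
The plan is to argue by contradiction, using a compactness extraction in the hyperspace of subcontinua together with the uniform bound on the boundaries. Suppose $\diam K_n$ does not converge to $0$. Then, after passing to a subsequence, there is $\varepsilon>0$ with $\diam K_n\geq\varepsilon$ for every $n$. Since $X$ is a compact metric space, its hyperspace of subcontinua is compact in the Hausdorff metric, so I may pass to a further subsequence along which $K_n$ converges to a subcontinuum $K\subseteq X$. As the diameter is continuous with respect to the Hausdorff metric, $\diam K\geq\varepsilon$, so $K$ is nondegenerate and therefore uncountable.

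The role of the bound $13$ is precisely to force the limiting boundary to be finite. After passing to a further subsequence I may assume that $\partial K_n$ has a constant number $p\leq 13$ of points for all $n$, and, labelling $\partial K_n=\{c^1_n,\dots,c^p_n\}$ and applying compactness of $X$ finitely many times, that $c^i_n\to c^i$ for each $i$. Put $C=\{c^1,\dots,c^p\}$; this is a set of at most $13$ points, and by construction every point of $\partial K_n$ lies arbitrarily close to $C$ once $n$ is large. Since $K$ is uncountable while $C$ is finite, I can pick a point $x\in K\setminus C$ and set $\rho=\dist(x,C)>0$.

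Now I would exploit local connectivity. Because $X$ is a Peano continuum it has a basis of connected open sets, so there is a connected open set $W$ with $x\in W$ and $\diam W<\rho/2$; then every point of $W$ stays at distance more than $\rho/2$ from $C$, whereas $\partial K_n$ is eventually contained in a small neighborhood of $C$, so $W\cap\partial K_n=\emptyset$ for all large $n$. On the other hand $x\in K=\lim K_n$, so for all large $n$ the set $K_n$ meets the open neighborhood $W$ of $x$. A connected set that meets $K_n$ but misses $\partial K_n$ must lie inside $\int K_n$: indeed $W\subseteq \int K_n\cup(X\setminus K_n)$, a disjoint union of two open sets, and $W$ lies in the one it meets, which is $\int K_n$ since $W$ meets $K_n$ away from its boundary. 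Thus $W\subseteq K_n$ for every large $n$.

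This yields the desired contradiction: choosing two distinct large indices $m\neq n$ gives $\emptyset\neq W\subseteq K_m\cap K_n$, contradicting that the $K_n$ are pairwise disjoint; hence $\diam K_n\to 0$. The step I expect to be most delicate is organizing the two nested compactness arguments — convergence of the continua themselves and the simultaneous convergence of their finitely many boundary points — so that the limiting set $C$ is genuinely finite. This is exactly the place where the uniform bound $13$ is needed, rather than the mere finiteness of each individual $\partial K_n$: without a uniform bound the boundaries could accumulate to an infinite set and the point $x\in K\setminus C$ need not exist.
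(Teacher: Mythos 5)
Your proof is correct, and while it opens exactly as the paper's does --- contradiction, compactness of the hyperspace of subcontinua, a nondegenerate limit continuum $K$ of diameter at least $\varepsilon$ --- the endgame is genuinely different. The paper picks $14$ distinct points of $K$, encloses them in pairwise disjoint locally connected subcontinua $L_1,\dots,L_{14}$ of $X$, uses the Vietoris topology to find two distinct members $K_m,K_n$ of the subsequence each meeting every $L_i$, and then shows $\partial K_m$ must meet each $L_i$ (otherwise $L_i=(L_i\setminus K_m)\cup(L_i\cap \int(K_m))$ would disconnect $L_i$, the first piece nonempty thanks to $K_n$), so $|\partial K_m|\geq 14$, contradicting the hypothesis directly. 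You instead pass to a further subsequence along which the at most $13$ boundary points converge to a finite set $C$, choose $x\in K\setminus C$, and build a small connected open $W\ni x$ that eventually misses every $\partial K_n$ while meeting every $K_n$; connectedness then forces $W\subseteq \int(K_n)$ for two distinct indices, contradicting pairwise disjointness rather than the cardinality bound. Both arguments use the uniform bound essentially; the paper's is a bit leaner (no bookkeeping of converging boundary points), while yours makes transparent why a \emph{uniform} bound, not mere finiteness of each boundary, is what matters. The only cosmetic point worth adding is the degenerate case $\partial K_n=\emptyset$: then $K_n=X$ by connectedness, which can occur for at most one index among pairwise disjoint nonempty sets, so it does not disturb your extraction of a subsequence with constant boundary cardinality $p\geq 1$.
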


\begin{proof}
Suppose for contradiction that there is $\varepsilon>0$ and an infinite set $S\subseteq \mathbb N$, such that $\diam K_n \geq\varepsilon$ for $n\in S$. 
Since the hyperspace of subcontinua of $X$ is compact, there is an infinite set $T\subseteq S$ such that $K_n, n\in T$ converges to some continuum $K\subseteq X$. Clearly, $\diam K \geq\varepsilon$, so $K$ is non-degenerate. 
Hence we can find distinct points $x_1,\dots x_{14}\in K$.
Let $L_i$ be a locally connected subcontinuum of $X$ which contains $x_i$ in its interior and assume moreover that $L_i\cap L_j=\emptyset$ for $i\neq j$.
Using the Vietoris topology, there exist $m, n \in T$, $m\neq n$, for which both $K_m$ and $K_n$ intersects every $L_i$, $i=1,\dots, 14$.

We easily get that the boundary of $K_m$ intersects each set $L_i$; indeed if
for some $i$ the boundary of $K_m$ does not intersect $L_i$, then $L_i = (L_i \setminus K_m) \cup (L_i \cap \int (K_m))$ would form a decomposition into open and nonempty sets since $\emptyset \ne L_i \cap K_n \subseteq L_i \setminus K_m$ and $\emptyset \ne L_i \cap K_m = L_i \cap \int (K_m)$, which contradicts the connectedness of $L_i$. Thus the boundary of $K_m$ intersects each $L_i$, hence the boundary consists of at least 14 points. This is a contradiction. 
\end{proof}

\begin{lemma}\label{continuous}
Let $X, Y$ be metric spaces and $X=\bigcup_{n\in\N} X_n$ where every $X_n$, $n\in\N$, is closed, $X_n\cap X_m=\emptyset$ for $n, m\geq 2, n\neq m$, $X_1\cap X_n\neq\emptyset$ for every $n\in\N$, and $\diam X_n$ tends to zero.
Let $f:X\to Y$ be a map whose restrictions to $X_n$, $n\in\N$, are continuous and $\diam f(X_n)$ converges to zero. Then $f$ is continuous.
\end{lemma}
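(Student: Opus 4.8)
The plan is to prove sequential continuity, which suffices since $X$ and $Y$ are metric. So I take a point $x\in X$ and a sequence $x_k\to x$ and aim to show $f(x_k)\to f(x)$. I would argue by contradiction: assuming $f(x_k)\not\to f(x)$, I pass to a subsequence (not relabeled) along which $d(f(x_k),f(x))\geq\varepsilon_0$ for some fixed $\varepsilon_0>0$. For each $k$ I select an index $n_k$ with $x_k\in X_{n_k}$; the whole argument then splits according to the behaviour of the integer sequence $(n_k)$, which either admits a bounded subsequence or tends to infinity.

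In the first case, a bounded subsequence of $(n_k)$ has, by the pigeonhole principle, a constant sub-subsequence $n_k\equiv n$. Along it $x_k\in X_n$ and $x_k\to x$, so $x\in X_n$ because $X_n$ is closed; continuity of the restriction $f|_{X_n}$ then yields $f(x_k)\to f(x)$ along this sub-subsequence, contradicting $d(f(x_k),f(x))\geq\varepsilon_0$. Hence this case cannot occur and we may assume $n_k\to\infty$.

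The heart of the argument is the case $n_k\to\infty$, where the key is to transfer everything to the anchor set $X_1$. For large $k$ we have $n_k\geq 2$, so by hypothesis $X_{n_k}\cap X_1\neq\emptyset$ and I can choose $z_k\in X_{n_k}\cap X_1$. Since both $x_k$ and $z_k$ lie in $X_{n_k}$, we get $d(x_k,z_k)\leq\diam X_{n_k}\to 0$, whence $z_k\to x$; as $z_k\in X_1$ and $X_1$ is closed, this forces $x\in X_1$. Continuity of $f|_{X_1}$ then gives $f(z_k)\to f(x)$, while $d(f(x_k),f(z_k))\leq\diam f(X_{n_k})\to 0$ because $f(x_k),f(z_k)\in f(X_{n_k})$. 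Combining these, $f(x_k)\to f(x)$, again contradicting the choice of the subsequence. Both cases being impossible, $f$ is continuous.

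I expect the only real subtlety to be the case where the approximating points $x_k$ are spread across infinitely many distinct pieces $X_{n_k}$, so that no single restriction $f|_{X_{n_k}}$ controls them. The mechanism that defeats this obstacle is precisely the combination of the two smallness hypotheses ($\diam X_n\to 0$ and $\diam f(X_n)\to 0$) with the anchoring condition $X_1\cap X_n\neq\emptyset$: the former let me replace $x_k$ by a nearby point $z_k\in X_1$ without moving its image appreciably, and the latter guarantees such a $z_k$ exists, reducing the whole matter to the single continuous restriction $f|_{X_1}$.
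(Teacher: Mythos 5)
Your proof is correct and follows essentially the same route as the paper's: both argue by contradiction via sequential continuity, split according to whether the sequence of indices is bounded (reducing to a single closed piece $X_n$) or escapes to infinity (anchoring through points of $X_1\cap X_{n_k}$ and using both diameter hypotheses). No gaps.
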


\begin{proof}
Suppose for contradiction that $f$ is not continuous.
Thus there is a sequence $\{x_n\}$ which converges to $x$ in $X$ such that $\{f(x_n)\}$ does not converge to $f(x)$ in $Y$.
Hence there is $\varepsilon>0$ such that $f(x_n)$ is not contained in the $\varepsilon$-ball with center $f(x)$ for infinitely many $n\in\N$. By choosing a convenient subsequence of $\{x_n\}$, we may suppose that it holds for all $n\in\N$. Thus no subsequence of $f(x_n)$ converges to $f(x)$.
We distinguish these cases:
\begin{itemize}[noitemsep]
\item[a)] $\{k\in\N: \exists n\in\N: x_n\in X_k\}$ is finite,
\item[b)] $\{k\in\N: \exists n\in\N: x_n\in X_k\}$ is infinite.
\end{itemize}
By passing to a suitable subsequence we may assume one of the following cases (still having that $f(x_n)$ does not converge to $f(x)$):
\begin{itemize}[noitemsep]
\item[a')] $\exists k\in\N\forall n\in\N: x_n\in X_k$,
\item[b')] $x_n \in X_{k_n}$ for pairwise distinct $k_n\in {\N}$, $n\in\N$.
\end{itemize}

In case a') we get that $x\in X_k$ since $X_k$ is closed. Then by continuity of  $f|_{X_k}$ it follows that $f(x_n)$ converges to $f(x)$, which is a contradiction.
In case b') we choose $c_n\in X_1\cap X_{k_n}$. Since $\diam X_{k_n}$ converges to zero it follows that $c_n$ converges to $x$. Since $X_1$ is closed we get $x\in X_1$ and since $f|_{X_1}$ is continuous it follows that $f(c_n)$ converges to $f(x)$. Since $\diam f(X_{k_n})$ converges to zero it follows that $f(x_n)$ converges to $f(x)$, which is a contradiction.
\end{proof}

\begin{obser} \label{openconninPeano}
Let $X$ be a Peano continuum and $U \subseteq X$ a nonempty open connected set with finite boundary. Then $\cl(U)$ is a Peano subcontinuum of $X$.
\end{obser}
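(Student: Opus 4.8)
The plan is to verify directly that $\cl(U)$ is a locally connected continuum. That $\cl(U)$ is a continuum is immediate: it is closed in the compact metrizable space $X$, hence compact and metrizable, it is nonempty, and it is connected as the closure of the connected set $U$. So the whole content is local connectedness, and since $X$ is locally connected this can only fail at the finitely many points of $\partial U$. Indeed, if $x \in U$ then, $U$ being open in $X$, local connectedness of $X$ gives a connected open (in $X$) neighborhood $V \subseteq U$ of $x$, and such a $V$ is open in $\cl(U)$; thus $\cl(U)$ is locally connected at every point of $U$. It remains to treat a point $p \in \partial U$, and for this it suffices to find, inside an arbitrarily small basic neighborhood $V_0 \cap \cl(U)$ of $p$, a connected neighborhood of $p$.

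Fix such a $p$. First I would choose a connected open set $V$ with $p \in V \subseteq V_0$, with $\diam(V)$ small, and with $\cl(V) \cap \partial U = \{p\}$; this is possible because $\partial U$ is finite (so a small ball around $p$ avoids $\partial U \setminus \{p\}$) and $X$ is locally connected. Then $V \cap \cl(U) = (V \cap U) \cup \{p\}$, and $V \cap U$ is open, so by Observation \ref{opencomponents} its components $\{G_\alpha\}$ are open in $X$. Call $G_\alpha$ \emph{good} if $p \in \cl(G_\alpha)$ and \emph{bad} otherwise, and set $S = \{p\} \cup \bigcup\{G_\alpha : G_\alpha \text{ good}\}$. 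Each set $G_\alpha \cup \{p\}$ with $G_\alpha$ good is connected and all of them contain $p$, so $S$ is connected. Thus the proof reduces to showing that $S$ is a neighborhood of $p$ in $\cl(U)$, which holds as soon as $p \notin \cl(B)$, where $B$ denotes the union of the bad components: for then a small open $W \ni p$ misses $B$, whence $W \cap \cl(U) = (W \cap U) \cup \{p\} \subseteq S$.

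The crux — and the only place where finiteness of $\partial U$ really enters — is the claim that the bad components do not accumulate at $p$. A short computation of boundaries (using that a component is clopen in $V \cap U$ and that $\cl(V) \cap \partial U = \{p\}$) shows that every bad component $G$ satisfies $\partial G \subseteq \partial V \cap U \subseteq \partial V$, and $\partial G \neq \emptyset$ since $X$ is connected and $G \neq X$ is open; hence $\cl(G)$ is a subcontinuum meeting $\partial V$. Suppose for contradiction that bad components $G_{\beta_n}$ contain points $q_n \to p$ with the $\beta_n$ pairwise distinct. Using compactness of the hyperspace of subcontinua of $X$, I would pass to a subsequence along which $\cl(G_{\beta_n})$ converges to a subcontinuum $C$; then $p \in C$, and since each $\cl(G_{\beta_n})$ meets the compact set $\partial V$, also $C \cap \partial V \neq \emptyset$. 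The key observation is that $C$ can contain no point of the open set $V \cap U$: if some $c \in C$ lay in a component $G_\gamma$, then points of $\cl(G_{\beta_n})$ would converge to $c \in G_\gamma$, yet $\cl(G_{\beta_n}) \cap G_\gamma = \emptyset$ whenever $\beta_n \neq \gamma$ (distinct components are disjoint, and the extra points $\partial G_{\beta_n} \subseteq \partial V$ avoid the open set $G_\gamma \subseteq V$), which is impossible because at most one index satisfies $\beta_n = \gamma$. Therefore $C \setminus \{p\} \subseteq \partial V$, so $C \subseteq \{p\} \cup \partial V$; but $p \notin \partial V$ and $\dist(p, \partial V) > 0$, so $\{p\}$ and $\partial V$ are separated, contradicting that the connected set $C$ meets both. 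This establishes $p \notin \cl(B)$, yields local connectedness at $p$, and hence shows that $\cl(U)$ is a Peano subcontinuum of $X$.
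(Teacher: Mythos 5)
Your proof is correct, but it takes a genuinely different route from the paper. The paper disposes of the statement in two lines: writing $X=\cl(U)\cup(X\setminus U)$ as a union of two closed sets whose intersection $\bd(U)$ is finite (hence locally connected), it invokes the general fact \cite[Exercise 8.37]{nadler} that each of the two closed pieces is then locally connected. You instead verify connectedness im kleinen at each boundary point $p$ by hand: you isolate $p$ from the rest of the finite boundary, split $V\cap U$ into its (open, by Observation \ref{opencomponents}) components, and use compactness of the hyperspace of subcontinua to show that the components not accumulating at $p$ cannot in fact accumulate at $p$ --- a convergence-of-continua argument very much in the spirit of the paper's own Lemma \ref{diameters}. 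Your boundary computations ($\partial G\subseteq\partial V$ for bad components, $C\subseteq\{p\}\cup\partial V$ for the limit continuum) are all correct, and the final disconnection of $C$ is a genuine contradiction. What each approach buys: the paper's proof is essentially a citation and is valid for any locally connected intersection, not just a finite one; yours is longer but self-contained, uses only tools already established in the paper, and makes visible exactly where finiteness of $\partial U$ enters (isolating $p$). Two cosmetic points you may wish to add: the degenerate case $\partial U=\emptyset$ (then $U=X$ by connectedness and there is nothing to prove), and an explicit remark that producing arbitrarily small connected --- not necessarily open --- neighborhoods at every point (connectedness im kleinen everywhere) implies local connectedness in the sense of the paper's definition; this equivalence is standard but is being used silently.
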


\begin{proof}
Obviously $\cl(U)$ is a continuuum, so we only need to prove that it is locally connected. Clearly $\cl(U)$ and $X\setminus U$ are closed sets, their union is the whole space $X$ and their intersection  is  locally connected  because it is finite. Thus by \cite[Exercise 8.37]{nadler} $\cl(U)$ is locally connected as well.

\end{proof}

\section{Main results}
Having all the preliminary statements verified, we are prepared to prove that containing uncountably many (local) cut points is a sufficient condition for Peano continua to be topological fractals (see Theorem \ref{thmunccutpoints} and Theorem \ref{thmlocalcutpoint}), which is the main result of this paper.

We will start by introducing some notation concerning both finite and infinite sequences of zero's and one's, as some proofs make use of the Cantor set. We proceed by introducing the so-called Menger order of a point and giving an easy observation to use later. 

Afterwards, we prove Lemma \ref{structureprop} stating that every Peano continuum $X$ with uncountably many cut points is of a specific structure that enables us to define self-maps of  $X$ naturally. In Theorem \ref{thmunccutpoints} we prove that we can find two such self-maps of $X$ that witness that $X$ is a topological fractal.

Although it is possible to prove the results for Peano continua with uncountably many local cut points using very similar approach, we have decided to make use of a semiconjugacy with the circle in this case. Thus in Lemma \ref{LemmaMonotoneMapOntoCircle} we prove that every Peano continuum $X$ with uncountably many local cut points is semiconjugated with the circle and in Lemma \ref{smallpreimages} we give some properties of such semiconjugacies. Lifting three contractions on the circle through the semiconjugacy to self-maps of $X$ gives three maps witnessing that $X$ is a topological fractal, as it is shown in the proof of Theorem \ref{thmlocalcutpoint}, which closes the section.

Let us now introduce some notation. We realize the Cantor set by $2^\omega=\{0,\,1\}^\omega$, the space of infinite sequences of $0,\,1$ with the product topology. We will denote $\periodic{0}:= (0,\,0,\,\dots),\,\periodic{1}:= (1,\,1,\,\dots) \in 2^\omega$. We denote by $2^{<\omega}$ the set of all finite sequences of $0,\,1$, and for any $s \in 2^{<\omega}$ we denote by $|s|$ its length (we do not assign to this set any further structure).

When working with finite and infinite sequences, the following two operations will be convenient. Let $s = (s_1,\,\dots,\,s_n),\,t=(t_1,\,\dots,\,t_k) \in 2^{<\omega}$ and $r=(r_1,\,r_2,\,\dots)$ $\in 2^\omega$, we define the concatenation by $s^\frown t:= (s_1,\,\dots,\,s_n,\,t_1,\,\dots,\,t_k) \in 2^{<\omega}$, respectively $s^\frown r:= (s_1,\,\dots,\,s_n,\,r_1,\,r_2,\,\dots) \in 2^\omega$.
Further, define $s^{op}:= (1-s_1,\,\dots,\,1-s_n)$, respectively $r^{op}:= (1-r_1,\,1-r_2,\,\dots)$.

\begin{defn}
    Let $X$ be a topological space and $x\in X$. The Menger order of $x$ in $X$ is the least natural number $n$ such that there is a basis of the neighborhood system of $x$ formed by sets whose boundaries are of size at most $n$, if any such natural number exists, and $\infty$ otherwise.
\end{defn}

\begin{obser}\label{Mengerorder}
    Let $X$ be a connected Hausdorff space and let $x\in X$ satisfy that $X\setminus \{x\}$ can be written as a disjoint union of $n$ nonempty open sets for some $n\in\N$. Then the Menger order of $x$ is at least $n$.
\end{obser}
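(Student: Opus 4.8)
The plan is to prove directly that in every neighborhood basis of $x$ at least one member has a boundary with $n$ or more points; since the Menger order is the least size to which all boundaries in some basis can be held, this yields Menger order at least $n$.

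First I would record two structural facts about the pieces. Because the $U_j$ are pairwise disjoint open sets, no point of $U_j$ with $j\neq i$ can be a limit point of $U_i$, so $\partial U_i \subseteq \{x\}$; and $x\in\overline{U_i}$, since otherwise $U_i$ would be a nonempty proper clopen subset of the connected space $X$. Hence $\overline{U_i}=U_i\cup\{x\}$.

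The heart of the argument --- and the step I expect to be the main obstacle, because the sets $U_i$ are \emph{not} assumed connected --- is to show that each $\overline{U_i}$ is nevertheless connected. Here the single point $x$ does the gluing: a proper nonempty clopen subset $B$ of $\overline{U_i}$ not containing $x$ would be closed in $X$ (being closed in the closed set $\overline{U_i}$) and open in $X$ (it has the form $O\cap U_i$ for some open $O\subseteq X$), hence clopen in $X$, contradicting connectedness. Once connectedness of $\overline{U_i}$ is available, the naive idea that ``a connected piece straddling the boundary of a neighborhood must meet that boundary'' can be applied to $\overline{U_i}$ rather than to the possibly disconnected $U_i$ itself, which is exactly what makes the count go through.

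To finish, I would take an arbitrary neighborhood basis $\mathcal B$ of $x$ and pick a point $q_i\in U_i$ for each $i$; by the Hausdorff property there is an open $G\ni x$ avoiding every $q_i$, and by the basis property some $W\in\mathcal B$ with $W\subseteq G$. Then $x\in W\cap\overline{U_i}$ while $q_i\notin W\cap\overline{U_i}$, so $W\cap\overline{U_i}$ is a proper nonempty open subset of the connected space $\overline{U_i}$ and therefore has nonempty frontier relative to $\overline{U_i}$. Any such frontier point lies in $\overline W\setminus W=\partial W$ and in $\overline{U_i}\setminus\{x\}=U_i$, so $\partial W\cap U_i\neq\emptyset$. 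As the $U_i$ are pairwise disjoint, $\partial W$ contains at least $n$ distinct points; since such a $W$ can be found below any prescribed neighborhood of $x$, no basis can keep all boundaries below $n$, which is precisely the assertion that the Menger order of $x$ is at least $n$.
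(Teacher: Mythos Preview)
Your argument is correct. Both you and the paper first establish $\overline{U_i}=U_i\cup\{x\}$ and then show, for a small enough neighborhood $W$ of $x$, that $\partial W$ meets each $U_i$; the difference lies in how the connectedness is leveraged. You take the extra step of proving that $\overline{U_i}$ itself is connected and then argue inside $\overline{U_i}$: the set $W\cap\overline{U_i}$ is a nonempty proper subset of a connected space, so its relative frontier is nonempty and lands in $\partial W\cap U_i$. The paper instead stays in $X$ and looks at the complementary piece $(X\setminus W)\cap U_i$: this is a nonempty proper subset of the connected space $X$, so its boundary is nonempty, and the inclusions $\partial((X\setminus W)\cap U_i)\subseteq(\partial W\cup\{x\})\cap(U_i\cup\{x\})$ together with $x\in\operatorname{int}W$ force that boundary into $\partial W\cap U_i$. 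The paper's route is marginally shorter since it avoids the auxiliary lemma on connectedness of $\overline{U_i}$, but your version makes the geometric picture (each closed ``petal'' $\overline{U_i}$ is a connected set pinched at $x$) explicit, which is a pleasant byproduct.
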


\begin{proof}
    Let $X\setminus \{x\}= A_1\cup\dots\cup A_n$ be a disjoint union of nonempty open sets. Then $\cl(A_i) = A_i\cup \{x\}$ for every $1\leq i \leq n$ by connectedness of $X$. Let $U$ be an open set containing $x$ such that its complement intersects every $A_i$ and $V\subseteq U$ be any neighborhood of $x$. We will show that $\bd(V)$ intersects each $A_i$, thus implying that it is of size at least $n$.

    Fix $1\leq i \leq n$ and note that $(X\setminus V)\cap A_i$ is a nonempty proper subset of $X$ and thus its boundary is nonempty. Further, $\bd((X\setminus V)\cap A_i)\subseteq \bd(X\setminus V)\cup \bd(A_i)=\bd(V)\cup \{x\}$. Also, $\bd((X\setminus V)\cap A_i)\subseteq \cl(A_i)= A_i\cup \{x\}$. But $x\notin \bd ((X\setminus V)\cap A_i)\subseteq \cl (X\setminus V)$ since $x$ lies in the interior of $V$, hence we obtain $\emptyset \ne \bd((X\setminus V)\cap A_i)\subseteq \bd(V)\cap A_i$.
\end{proof}

\begin{prop}\label{structureprop}
Let $X$ be a Peano continuum with uncountably many cut points. Then there exist pairwise disjoint non-degenerate Peano subcontinua $L, R$, $X_s, s\in 2 ^{<\omega}$, of $X$, points $l, r, l(s), r(s)\in X$ and a homeomorphism $h:2^\omega\to C\subseteq X$ such that 
\begin{equation}
\begin{split} 
L\cap C &=\{l\}, \\
R\cap C &=\{r\}, \\
X_s\cap C &=\{l(s), r(s)\}, \\
h(\periodic 0)&=l, h(\periodic 1)=r, \\
h(s^\frown 1\periodic0)&=r(s), h(s^\frown 0\periodic1)=l(s), \\
X &= L\cup R\cup C\cup \bigcup X_s. \\
\end{split}
\end{equation}
\end{prop}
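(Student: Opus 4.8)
The plan is to realize $X$ as an arc carrying a Cantor set of cut points, with the complementary regions playing the roles of $L$, $R$ and the gap continua $X_s$.

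First I would reduce to a fixed pair of endpoints lying on a single arc. Fix a countable dense set $\{d_1,d_2,\dots\}$ in $X$. Every cut point $x$ separates two of the $d_i$ (the components of $X\setminus\{x\}$ are open by Observation~\ref{opencomponents}), so by pigeonhole there are a fixed pair $a=d_i$, $b=d_j$ and an uncountable set $S$ of cut points each separating $a$ from $b$. By Theorem~\ref{arcwiseconnected} fix an arc $\alpha$ from $a$ to $b$; since no point off $\alpha$ can separate its endpoints, $S\subseteq\alpha$, and $S$ inherits the linear order of $\alpha$, which coincides with the cut‑point order. For $x\in S$ let $A_x$ (resp.\ $B_x$) be the component of $X\setminus\{x\}$ containing $a$ (resp.\ $b$).

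Next I would pass to order‑two separators. If $X\setminus\{x\}$ has a third component $D_x$ besides $A_x,B_x$, then $\cl(D_x)$ is a non‑degenerate subcontinuum with one‑point boundary $\{x\}$, and for distinct such $x$ these continua are pairwise disjoint (a connected subset of $X\setminus\{y\}$ meeting $D_y$ lies in $D_y$). By Lemma~\ref{diameters} only finitely many have diameter at least $1/k$, so only countably many such $x$ exist. Hence the set $S_2\subseteq S$ of points $x$ with $X\setminus\{x\}=A_x\sqcup B_x$ (two open connected pieces) is still uncountable, and by Observation~\ref{openconninPeano} the closures $\cl(A_x),\cl(B_x)$ are Peano subcontinua. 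This is the device that makes the later pieces connected.

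Then I would run a Cantor scheme inside $S_2$. Taking condensation points gives a non‑degenerate perfect set $P\subseteq\alpha$ in which $S_2$ is dense. Recursively, for $s\in 2^{<\omega}$ I choose order‑two cut points $l(s),r(s)\in S_2$ (the inner gap endpoints) and let $Y_s$ be the subcontinuum of $X$ lying, in the cut‑point order, between the two order‑two cut points bounding the branch $[s]$; I split $Y_s=Y_{s0}\cup X_s\cup Y_{s1}$ where $Y_{s0},Y_{s1}$ are the children and $X_s:=\cl\bigl(Y_s\setminus(Y_{s0}\cup Y_{s1})\bigr)$ is the gap between $l(s)=h(s^\frown 0\periodic1)$ and $r(s)=h(s^\frown1\periodic0)$. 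Using density of $S_2$ in $P$ together with Lemma~\ref{diameters} (which forces the decorations deep inside each $Y_s$ to be small) I can keep the mesh tending to zero. The nested intersections then define $h\colon 2^\omega\to C$; since the gaps $X_s$, $s\in 2^{<\omega}$, are pairwise disjoint with two‑point boundaries, Lemma~\ref{diameters} gives $\diam X_s\to 0$, so $h$ is a homeomorphism onto a Cantor set $C$ with $h(\periodic0)=l$ and $h(\periodic1)=r$.

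Finally I would assemble and check the pieces. Put $L=\cl(A_l)$ and $R=\cl(B_r)$. By Observation~\ref{openconninPeano} these and all $X_s$ are Peano subcontinua, and each is non‑degenerate because it contains a non‑degenerate sub‑arc of $\alpha$. The identities $L\cap C=\{l\}$, $R\cap C=\{r\}$, $X_s\cap C=\{l(s),r(s)\}$ hold because an order‑two endpoint seals its region off at exactly that one boundary point, so the closures add no further points of $C$. The union is all of $X$: any $z\notin C$ must leave the shrinking regions $Y_s$ at a finite level and hence land in some $X_s$, in $L$, or in $R$; pairwise disjointness of $L,R,X_s$ follows since distinct pieces occupy disjoint cut‑order intervals and their four relevant $C$‑endpoints are always distinct (a tail $\periodic0$ never equals a tail $\periodic1$), so their closures cannot meet. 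The main obstacle is precisely the control achieved in the second and fourth steps: guaranteeing, via the restriction to order‑two cut points and the diameter estimate of Lemma~\ref{diameters}, that each complementary region is a single Peano subcontinuum attached to $C$ at exactly the prescribed one or two points, with no stray boundary and with the family covering $X$.
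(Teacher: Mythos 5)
Your overall architecture is the paper's: pigeonhole to get uncountably many cut points separating one fixed pair, trap them on an arc, discard the ``bad'' ones, extract a Cantor set, and take the pieces to be closures of the intervals between consecutive Cantor points. The divergence --- and the source of the gaps --- is which cut points you discard. You keep every $x$ whose complement has exactly two components, eliminating the rest via Lemma~\ref{diameters} applied to the closures of third components (that countability argument is fine). The paper instead discards all points of Menger order at least three, using Whyburn's theorem, and then takes $C$ inside the resulting $G_\delta$ set. Your condition is strictly weaker, and two steps break. First, for $p,q\in S_2$ the set $B_p\cap A_q$ need not be connected, so Observation~\ref{openconninPeano} does not apply to your $Y_s$ and $X_s$ as claimed: take $X$ to be a circle through $p$ and $q$ with an arc to $a$ attached at $p$ and an arc to $b$ attached at $q$; then $X\setminus\{p\}$ and $X\setminus\{q\}$ each have exactly two components, yet the region between $p$ and $q$ is the circle minus two points, which is disconnected. (Here $p$ has Menger order $3$, which is exactly why the paper's criterion excludes it; Menger order $\le 2$ does force each component of $B_p\cap A_q$ to have boundary $\{p,q\}$ and, via Observation~\ref{Mengerorder} applied inside a small connected neighborhood, forbids two such components.)

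Second, and more seriously, you only control the countably many \emph{chosen} gap endpoints; the remaining points of $C$ are mere limits of points of $S_2$ and can themselves have a third complementary component (a ``decoration''). At such a limit point $h(t)$ the nested intersection $\bigcap_n Y_{t|n}$ contains the whole decoration and is a nondegenerate continuum, so $h$ is not well defined, and the decoration is covered by no $L$, $R$, $X_s$ or $C$, so the final union fails. Your remark that Lemma~\ref{diameters} makes ``the decorations deep inside each $Y_s$ small'' shows only that there are countably many decorated points with decorations of diameter at least a fixed $\varepsilon$; it does not show your Cantor scheme avoids all of them, and as described it need not. This is precisely the work done in the paper by restricting to the Borel set of points of Menger order at most two before extracting $C$: there \emph{every} point of $C$ admits arbitrarily small neighborhoods with two-point boundary, which is what the paper's closing argument (the choice of $W$ around $x'$) uses to rule out decorations. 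Both gaps are repairable --- e.g.\ by proving that decorated points and points with disconnected ``between'' regions have Menger order at least three and are therefore countable in number, and then arranging the scheme to exclude them from $C$ --- but that is essentially re-deriving the paper's Whyburn step, and as written your proof does not close either hole.
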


\begin{proof}
Let $\mathcal B$ be a countable base for $X$ formed by nonempty connected sets. For $U, V\in\mathcal B$ let $D(U, V)$ be the set of all cut points that separate $U$ and $V$ (i.e. the set of points $p\in X$ for which there exist open disjoint sets $Y, Z$ with $X\setminus\{p\}=Y\cup Z$ and $U\subseteq Y$, $V\subseteq Z$).
Since every cut point of $X$ lies in $D(U,V)$ for some $U, V\in\mathcal B$ and the set of all cut points is uncountable, there are some $U, V\in\mathcal B$ for which $D(U, V)$ is uncountable.

Let $E$ be any arc from $U$ to $V$, there is some by Theorem \ref{arcwiseconnected}. We immediately obtain $D:=D(U, V)\subseteq E$ since the set $U \cup E \cup V$ is connected. It also follows easily by local connectedness of $X$ that the set $D$ is closed.
Let $F$ be the collection of all $x\in D$ of Menger order at least three united with the endpoints of $E$. Then $F$ is countable by \cite[Theorem 7]{Whyburn} and the complement of any point in $D\setminus F$ consists of only two components by Observations \ref{opencomponents} and \ref{Mengerorder}.
Since $D\setminus F$ is an uncountable Borel set (in fact $G_{\delta}$), it contains a copy of the Cantor set $C$ \cite[Theorem 13.6]{Kechris}.
Let $h: 2^\omega\to C$ be an increasing homeomorphism (with respect to the lexicographical order on $2^\omega$ and the order on $C$ given by the order of $E$)  (folklore - e.g. by the General mapping theorem \cite[Theorem 7.4]{nadler}).

Let $L_s$ (resp. $R_s$) be the component of $X\setminus h(s)$, $s\in 2^{\omega}$, which contains $U$ (resp. $V$). The sets $L_s,R_s$ are open by Observation \ref{opencomponents}.

For $s \in 2^{<\omega}$ let 
\begin{align*}
    L &:=\cl (L_{\periodic0})=L_{\periodic0}\cup \{h(\periodic0)\}, \\
    R &:=\cl (R_{\periodic1})=R_{\periodic1}\cup \{h(\periodic1)\}, \\
    X_s &:=\cl (L_{s^\frown1\periodic0}\cap R_{s\frown0\periodic1})=(L_{s^\frown1\periodic0}\cap R_{s\frown0\periodic1})\cup \{h(s^\frown1\periodic0), h(s^\frown0\periodic1)\}.
\end{align*}
    
These sets are the closures of open connected sets with finite boundaries, hence by Lemma \ref{openconninPeano} these are Peano continua. Note that all continua $L,R,X_s, s\in 2^{<\omega},$ are non-degenerate since they contain non-degenerate subsets of $E$ (for $L,R$ this follows from the fact that the endpoints of $E$ do not lie in $C$). Moreover, they are mutually disjoint since for $s<t\in 2^\omega$ we have $L_s\subseteq L_t\setminus\{h(t)\}=X\setminus R_t$.
It remains to verify that $X=L\cup R\cup C \cup \bigcup X_s$.

Fix any $x\in X\setminus C$. By Theorem \ref{arcwiseconnected}, there is an arc $A$ joining $x$ and $h(\periodic{0})$. Consider the point $x'\in A\cap E$ such that no points in $A$ between $x$ and $x'$ lie in $E$ (we may have $x=x'$). We claim that $x'\notin C$: otherwise let $ W$ be a neighborhood of $x'$ with a two-element boundary that contains neither $x$, nor any of the endpoints of $E$. Thus necessarily both points of the boundary of $W$ lie in $E$ and at least one of them lies in $A$, a contradiction.
It is now easy using $x'$ to find an appropriate $Y\in \{L,R\}\cup \{X_s,s\in 2^{<\omega}\}$ such that $x\in Y$.
\end{proof}

\begin{thm}\label{thmunccutpoints}
Let $X$ be a Peano continuum with uncountably many cut points.
Then $X$ is a topological fractal with witnessing number two.
\end{thm}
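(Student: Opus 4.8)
The plan is to use the decomposition from Proposition \ref{structureprop} to fold $X$ onto its ``left half'' and ``right half'' by two self-maps $f,g$, and then to read off topological contractivity from the fact that the images of long compositions are thin continua, via Lemma \ref{diameters}. Concretely, I would let $f$ and $g$ act on the Cantor spine $C=h(2^\omega)$ as the two inverse branches of the shift, sending $h(t)$ to $h(0^\frown t)$ and $h(t)$ to $h(1^\frown t)$ respectively, so that $f(C)$ and $g(C)$ are the two halves of $C$ and $f(C)\cup g(C)=C$. On each gap continuum $X_s$ I would define $f$ and $g$ as continuous surjections onto $X_{0^\frown s}$ and $X_{1^\frown s}$ carrying the boundary points $l(s),r(s)$ to the corresponding boundary points of the image gap; such surjections exist by Theorem \ref{coverings}(\ref{peano6}) because all the $X_s$ are non-degenerate Peano continua, and they are chosen compatibly with the spine so that the pieces agree at the points of $C$. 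After this step $f(X)\cup g(X)$ already contains $C$ together with every $X_u$ with $|u|\ge 1$.

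The delicate point, and the one I expect to be the main obstacle, is the two end continua $L,R$ (and the central gap $X_\emptyset$): because $L$ is attached to the rest of $X$ at the single point $l=h(\periodic{0})$, any map which both covers $L$ and fixes $l$ forces $f(L)=L$, and then $f^k(X)\supseteq L$ for all $k$, destroying contractivity. To get around this I would send each end continuum into the \emph{opposite} side using the reversing operation $s\mapsto s^{op}$: define $f$ on the spine by $h(t)\mapsto h(0^\frown t^{op})$, so that $f(l)=h(0\periodic{1})=l(\emptyset)$ and $f(r)=h(\periodic{0})=l$, and correspondingly let $f$ map $R$ onto $L$ and $L$ into the central gap, with $g$ built symmetrically on the right. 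Under such a map the forward orbit of each of $L,R$ enters the gaps $X_u$ after finitely many steps and then moves to gaps of strictly increasing depth $|u|$, so that $f^k(L),f^k(R)$ are eventually gap continua whose diameters tend to $0$ by Lemma \ref{diameters}. The genuinely new difficulty is that mixing $f$ and $g$ can create a short cycle among $\{L,R\}$ (e.g. $R\mapsto L\mapsto R$), which would again obstruct contractivity; I would break this either by composing the offending end-map with a contraction of that end continuum into itself — every non-degenerate Peano continuum admits such a self-map, obtained by routing through an arc via Theorem \ref{coverings}(\ref{peano6}) — and covering the resulting collar and the central gap by finitely many auxiliary maps, or by first establishing contractivity for the triple $\{f,gf,gg\}$ and passing to $\{f,g\}$ through Observation \ref{threetotwo}. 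The auxiliary cap/collar maps can be absorbed without spoiling contractivity because their sandwiched compositions are constant, which is exactly the content of Observation \ref{fractalmaps}.

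It remains to check continuity and to extract contractivity. Continuity of $f$ and $g$ follows from Lemma \ref{continuous}, applied with the spine $C$ playing the role of the distinguished closed piece met by all the others and with the gap continua as the remaining pieces: the hypotheses $\diam X_s\to 0$ and $\diam f(X_s)\to 0$ hold by Lemma \ref{diameters} since the $X_s$ and their images $X_{0^\frown s^{op}}$ are pairwise disjoint subcontinua with two-point boundaries. For contractivity I would argue that for a word $w$ of length $k$ in $\{f,g\}$ the image $w(X)$ is a subcontinuum whose spine is a level-$k$ cylinder $h([c_1\cdots c_k])$ and whose two end caps are, for $k$ large, gap continua $X_u$ with $|u|$ comparable to $k$; these images, taken over all words of a fixed length, form a family of subcontinua with uniformly bounded boundaries (this is where the constant $13$ in Lemma \ref{diameters} leaves room), and after passing to a pairwise disjoint subfamily Lemma \ref{diameters} forces $\diam w(X)\to 0$. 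Together with $f(X)\cup g(X)=X$ this shows that $\{f,g\}$ witnesses $X$ as a topological fractal. Finally the witnessing number cannot be $1$, since a single topologically contractive map has $f^k(X)$ of diameter tending to $0$ and hence cannot be onto; thus two is optimal.
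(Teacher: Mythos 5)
Your overall strategy coincides with the paper's: the same decomposition from Proposition \ref{structureprop}, the same map $f$ (sending $R$ onto $L$, $L$ onto $X_\emptyset$, $X_s$ onto $X_{0^\frown s^{op}}$ and $h(t)$ to $h(0^\frown t^{op})$), continuity via Lemmas \ref{diameters} and \ref{continuous}, and contractivity via Observations \ref{fractalmaps} and \ref{threetotwo}. However, there is a genuine gap at exactly the point you yourself single out as the main difficulty, and neither of your proposed fixes closes it. If $g$ is ``built symmetrically on the right,'' then $f(R)=L$ and $g(L)=R$, so $g\circ f$ maps $R$ onto $R$; hence $(g\circ f)^k(X)\supseteq R$ for every $k$, and not only is $\{f,g\}$ not topologically contractive, but neither is $\{f,\,gf,\,gg\}$, since it contains the offending word $gf$. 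So the route through Observation \ref{threetotwo} does not rescue a symmetric $g$. Your other fix --- shrinking an end continuum into itself and covering the leftover collar and $X_\emptyset$ by auxiliary maps --- cannot preserve the witnessing number: Observation \ref{fractalmaps} enlarges a contractive family by genuinely new maps, and Observation \ref{threetotwo} only removes maps that are words in the remaining two, so you would end up with strictly more than two witnessing maps.

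The idea your sketch is missing is that $g$ must be asymmetric and highly degenerate: in the paper $g$ is \emph{constant} on $L$, on $R$, on $X_\emptyset$ and on every $X_{1^\frown s}$; it maps $X_{0^\frown s}$ onto $X_{1^\frown s}$ and $0^\frown C$ onto $1^\frown C$; and it covers $R$ not by a piece-to-piece surjection but by sending the half $1^\frown C$ of the Cantor spine onto $R$ via the Cantor staircase composed with a Peano surjection (this composition is what keeps $g$ consistent at the identified pairs $l(s),r(s)$). With this choice $g\circ g$ is constant on every piece and $g(g(X))=R$, so every sandwiched word $gg\circ f'\circ gg$ is constant, Observation \ref{fractalmaps} applies to $\{f,\,gf\}\cup\{gg\}$, and Observation \ref{threetotwo} brings you back to $\{f,g\}$. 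A secondary weak point is your final contractivity step: ``passing to a pairwise disjoint subfamily'' of the images $w(X)$ and invoking Lemma \ref{diameters} does not yield the required uniform bound over \emph{all} words of a given length (the images need not admit a disjoint subfamily that controls the rest). The paper instead shows directly that every word of length $n+2$ in $\{f,\,gf\}$ has image inside some $Y_s=\bigcup\{X_{s^\frown t}:t\in 2^{<\omega}\}\cup\{s^\frown t: t\in 2^\omega\}$ with $|s|=n$, and that $\diam Y_s\to 0$ because each gap $X_{s^\frown t}$ meets the small cylinder; Lemma \ref{diameters} is used only to make the individual $\diam X_u$ small. Your remark that witnessing number one is impossible is correct.
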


\begin{proof}
Let us find Peano continua $L, R, X_s$, points $l, r, l(s), r(s)$ and a homeomorphism $h:2^\omega\to C$ as ensured by Proposition \ref{structureprop}.
To simplify the notation, we suppose that $C=2^\omega$ and $h$ is the identity.
Note that by Theorem \ref{coverings} we can map continuously every non-degenerate Peano continuum onto any other, also with prescription on some points. Hence we are able to find a map $f:X\to X$ which is continuous on each piece $R, L, C, X_s, s\in 2^{<\omega},$ such that (see Figure \ref{fig:mapf})
\begin{align*}
f(R) &= L  &     f(r) &=l, \\
f(L) &= X_\emptyset &  f(l)&=l({\emptyset}), \\
f(X_s) &= X_{0^\frown s^{op}} &   f(l(s)) &= r({0^\frown s^{op}}), \\
& & f(r(s)) &= l({0^\frown s^{op}}), s\in 2^{<\omega},\\
f(C)&= 0^\frown C & f(t)&= 0^\frown t^{op}, t\in C.  \\
\end{align*}

\begin{figure}[htp]
    \centering
    \includegraphics[width=14cm]{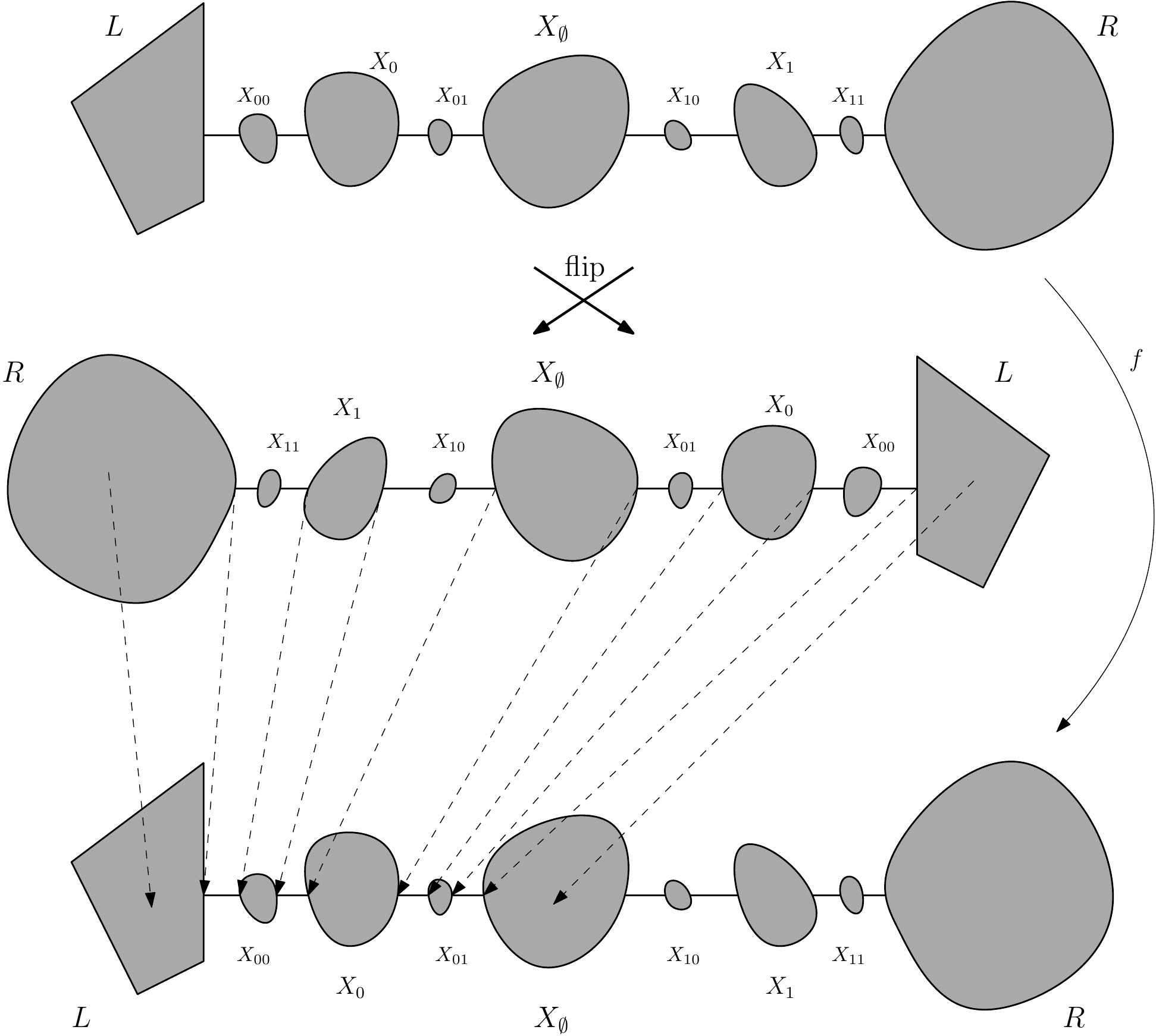}
    \caption{The map $f$}
    \label{fig:mapf}
\end{figure}


	
Let $c:2^\omega \to I$ be the well-known continuous onto map (The Cantor Stairs) and let $p: I \to R$ be any surjective continuous map satisfying $p(0) = \periodic{1} ( = r)$. Denote $q=p\circ c$. Thus $q$ maps continuously $C$ onto $R$ in such a way that $q(\periodic{0})=r$ and $q(l(s))=q(r(s))$ for every $s\in 2^{<\omega}$.


By the same reasons as above we may define a map $g:X\to X$ which is continuous on each piece $R, L, C, X_s, s\in 2^{<\omega},$ such that for all $s\in 2^{<\omega}, t\in C$ the following holds (see Figure \ref{fig:mapg}):

\begin{align*}
g(R) &= \{p(1)\},  &   &  \\
g(L) &= \{r(X_\emptyset)\}, & &   \\
g(X_{\emptyset})&=\{r\},   & &\\ 
g(X_{1^\frown s}) &= \{q(l(s))\}= \\
&= \{q(r(s))\}, &  & \\
g(X_{0^\frown s}) &= X_{1^\frown s} & g(r(0^\frown s)) &= r(1^\frown s), \\
 & & g(l(0^\frown s)) &= l(1^\frown s), \\
g(1^\frown C)&= R &  g(1^\frown t)&= q(t), \\
g(0^\frown C)&= 1^\frown C &  g(0^\frown t)&= 1^\frown t.  \\
\end{align*}

\begin{figure}[htp]
    \centering
    \includegraphics[width=14cm]{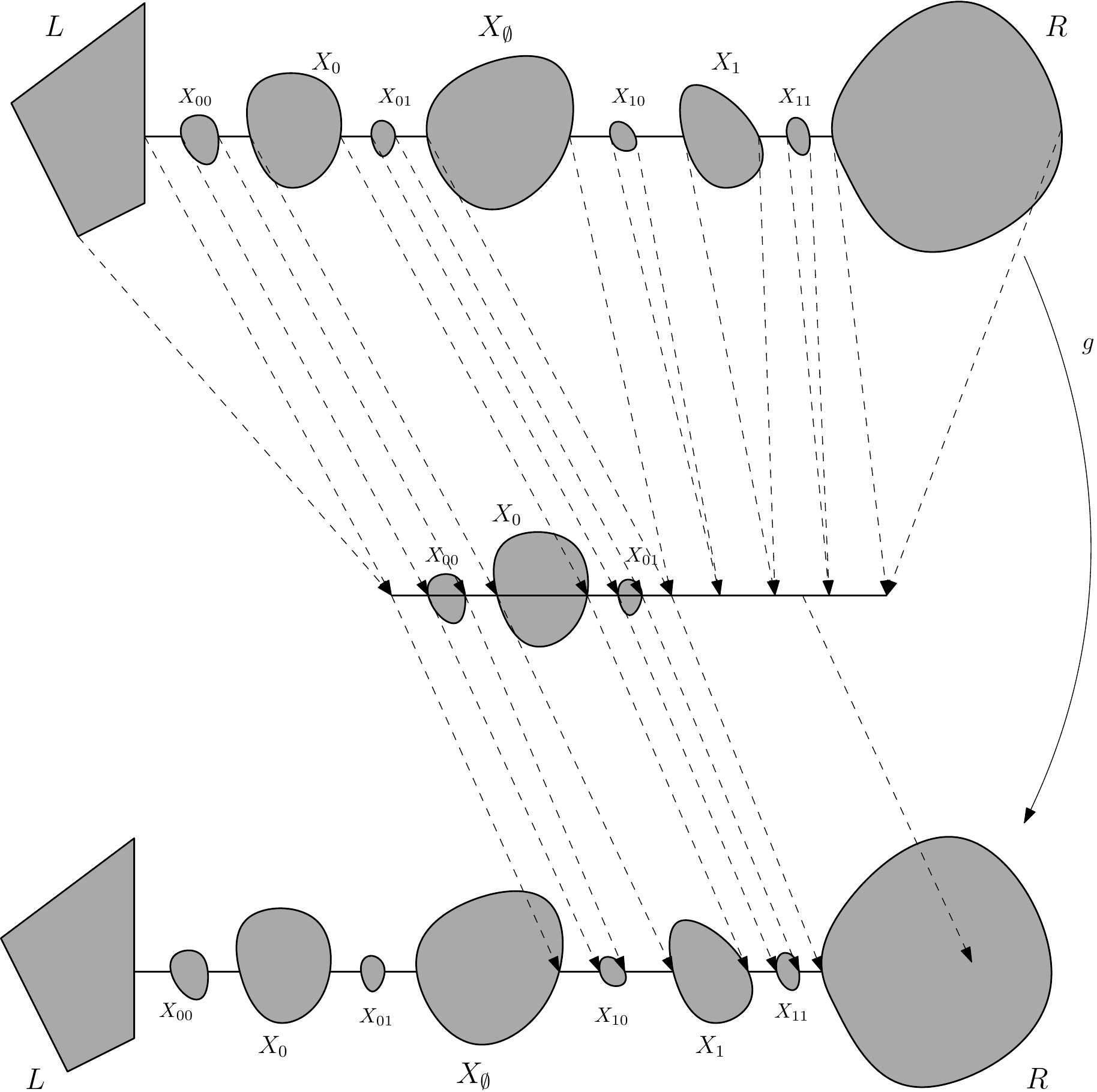}
    \caption{The map $g$}
    \label{fig:mapg}
\end{figure}

It follows directly from the definitions that $f$ and $g$ are well--defined and $X=f(X)\cup g(X)$. Note that $g|_C$ is continuous due to the definition and the basic properties of the Cantor set.
Moreover, $\diam X_s$ tend to zero by Lemma \ref{diameters}, and thus both $f$ and $g$ are continuous by Lemma \ref{continuous}.

Therefore the last thing left to prove is that $\{f,\,g\}$ is topologically contractive. This will follow by Observation \ref{threetotwo} as soon as we justify that $\{f_0,\,f_1,\,g_1\}$ is topologically contractive, where $f_0:= f$, $f_1:= g \circ f$ and $g_1:= g \circ g$. We will prove the latter using Observation \ref{fractalmaps} (with $n=2$ and $m=1$).

To check that the hypotheses of Observation \ref{fractalmaps} are indeed satisfied, first note that since $g$ is constant on $X'$ for every $X' \in \{L,\,R,\,X_\emptyset\} \cup \{X_{1^\frown s},\,s \in 2^{<\omega}\}$ and $g(X_{0^\frown s}) = X_{1^\frown s}$, $g_1 = g \circ g$ is constant on $X'$ for every $X' \in \{L,\,R\} \cup \{X_s,\,s \in 2^{<\omega}\}$. For $f_0 =f$ we obtain $f_0(L) = X_\emptyset$, $f_0(R) = L$ and $f_0(X_s) = X_{0^\frown s^{op}}$ for every $s \in 2^{<\omega}$. Thus $f_1(X_s) = g(f(X_s)) = g(X_{0^\frown s^{op}}) = X_{1^\frown s^{op}}$ for every $s \in 2^{<\omega}$ and $f_1$ is constant on $L$ and on $R$. This immediately gives us that for every $f' \in \bigcup_{i=0}^{\infty} \{f_0,\,f_1\}^i$ the map $g_1\circ f' \circ g_1$ is constant since $g_1(X) = g(g(X)) = R$.

Secondly we have to verify that $\{f_0,\,f_1\}$ is topologically contractive. For fixed $s \in 2^{<\omega}$ let us denote
$$Y_s:= \bigcup \{X_{s^\frown t};\,t \in 2^{<\omega}\} \cup \{s^\frown t;\,t \in 2^{\omega}\}$$
and note that just by the definition of $f_0,\,f_1$ we have that
$f_1(X) = g(f(X))= g(L \cup Y_0 \cup X_\emptyset) =Y_1 \subseteq Y_\emptyset$, $f_0(Y_\emptyset)=f(Y_\emptyset)  = Y_0 \subseteq Y_\emptyset$ and $(f_0)^2 (X) = f_0(L \cup Y_0 \cup X_\emptyset) \subseteq Y_0 \cup X_\emptyset \subseteq  Y_\emptyset$. Thus $f'(X)$ is a subset of $Y_\emptyset$ for every $f' \in \{f_0,\,f_1\}^2$.
	
Finally, note that for every $s \in 2^{<\omega}$ we have that $f_0(Y_s) =  Y_{0^\frown s^{op}}$  and similarly $f_1(Y_s) = Y_{1^\frown s^{op}}$. Thus for every $n \in \N$ and $f' \in \{f_0,\,f_1\}^{n+2}$ we obtain by induction from the preceding that $f'(X) \subseteq Y_s$ for some $s \in 2^{<\omega},\,|s| = n$. Let $\varepsilon >0$, then there exists $n \in \N$ such that for every $s \in 2^{<\omega}$ satisfying $|s| \geq n$ the diameters of $X_s$ and $\{s^\frown t;\,t \in 2^{\omega}\} \subseteq C$ are less than $\varepsilon/3$. Since $X_{s^\frown r}\cap \{s^\frown t;\,t \in 2^{\omega}\} \ne \emptyset$ for every $r \in 2^{<\omega}$, it follows that $\diam Y_s<\varepsilon$ for every $s \in 2^{<\omega}$, $|s| \geq n$.

Thus both hypotheses of Observation \ref{fractalmaps} are satisfied, this gives us that hypotheses of Observation \ref{threetotwo} are satisfied and thus we finally obtain that the maps $f,\,g$ witness that $X$ is a topological fractal.	
\end{proof}

A dendrite is a Peano continuum which does not contain a topological copy of a simple closed curve.

\begin{cor} \label{dendrites}
Every dendrite is a topological fractal with witnessing number two.
\end{cor}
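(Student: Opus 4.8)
The plan is to derive the corollary directly from Theorem~\ref{thmunccutpoints} by showing that every non-degenerate dendrite has uncountably many cut points. (The degenerate dendrite is a single point, which has witnessing number one, so we assume $D$ is non-degenerate, which is the intended case.) Since Theorem~\ref{thmunccutpoints} already yields both the topological-fractal structure and the witnessing number two as soon as uncountably many cut points are present, the whole corollary reduces to producing such a set.

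First I would use arcwise connectedness. A non-degenerate dendrite $D$ is a Peano continuum, and $D$ is itself open and connected, so by Theorem~\ref{arcwiseconnected} there is an arc $A\subseteq D$ from a point $a$ to a distinct point $b$. The key claim is that every interior point $c\in A\setminus\{a,b\}$ is a cut point of $D$; granting this, the set of cut points contains the (uncountable) interior of $A$, and Theorem~\ref{thmunccutpoints} finishes the proof. To prove the claim, suppose towards a contradiction that $c$ is an interior point of $A$ for which $D\setminus\{c\}$ is connected. As $\{c\}$ is closed, $D\setminus\{c\}$ is an open connected subset of the Peano continuum $D$, hence arcwise connected by Theorem~\ref{arcwiseconnected}; since $a,b\neq c$, there is an arc $B$ from $a$ to $b$ inside $D\setminus\{c\}$. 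Now $A$ is an arc from $a$ to $b$ containing $c$ while $B$ is an arc from $a$ to $b$ avoiding $c$, so $A\neq B$, and the union of two distinct arcs with common endpoints contains a simple closed curve. Thus $A\cup B\subseteq D$ contains a simple closed curve, contradicting the assumption that $D$ is a dendrite.

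The step I expect to require the most care is this last one, namely extracting a simple closed curve from the two distinct arcs $A$ and $B$, which may overlap in a complicated way off their shared endpoints; this is a classical lemma of continuum theory available in Nadler's book. A cleaner alternative, avoiding the extraction entirely, is to invoke the standard fact that a dendrite is uniquely arcwise connected: then the unique arc from $a$ to $b$ is $A$, it contains $c$, and no arc from $a$ to $b$ can avoid $c$, which is directly the contradiction with the existence of $B$. Either route establishes the claim, after which the corollary is immediate from Theorem~\ref{thmunccutpoints}.
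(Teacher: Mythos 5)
Your proposal is correct and takes essentially the same route as the paper: both reduce the corollary to Theorem~\ref{thmunccutpoints} by observing that a non-degenerate dendrite has uncountably many cut points. The only difference is that the paper simply cites this fact from Nadler's book (Theorem 10.8), whereas you supply a direct argument (every interior point of an arc is a cut point, since otherwise two distinct arcs with common endpoints would yield a simple closed curve); that argument is sound and is a fine self-contained substitute for the citation.
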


\begin{proof}
A non-degenerate dendrite contains uncountably many cut points by \cite[Theorem 10.8]{nadler}, hence we can use Theorem \ref{thmunccutpoints}.
\end{proof}

\begin{lemma}\label{smallpreimages}
Let $p: X\to S$ be a continuous map of a Peano continuum $X$ onto a circle 
such that only countably many points from $S$ have nondegenerate preimage.
Then for every $\varepsilon>0$ there is a finite set $K\subseteq S$ such that $\diam p^{-1}(L)<\varepsilon$ for every component $L$ of $S\setminus K$. 
\end{lemma}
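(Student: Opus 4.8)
The plan is to reduce the statement to a covering argument on the circle: I want to cut $S$ at finitely many points so that each resulting open arc has preimage of diameter $<\varepsilon$. For a point $y$ with $\diam p^{-1}(y)<\varepsilon$ this is easy, since for closed arcs $J_n\downarrow\{y\}$ the preimages $p^{-1}(J_n)$ decrease to $p^{-1}(y)$, so $\diam p^{-1}(J_n)\to\diam p^{-1}(y)$ and some small arc around $y$ already has preimage of diameter $<\varepsilon$. The only obstruction comes from the points $z$ with $\diam p^{-1}(z)\ge\varepsilon$: no arc with $z$ in its interior can have small preimage, so such $z$ must themselves become cut points, and the real issue becomes the behaviour of the preimage on the two arcs immediately adjacent to $z$.

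To control this I would study, for $z\in S$ and each of the two sides, the one-sided limit set $L^{+}(z):=\bigcap_{c}\cl\big(p^{-1}((z,c))\big)$, the intersection taken over $c$ tending to $z$ from that side. One checks that $L^{+}(z)\subseteq p^{-1}(z)$ and, using that nested nonempty compacta satisfy $\diam A_c\to\diam\bigcap A_c$, that $\diam p^{-1}((z,c))\to\diam L^{+}(z)$ as $c\to z$. The key claim is then that $L^{+}(z)$ (and likewise $L^{-}(z)$) is a single point. This one fact yields everything: applied at $z$ it gives $\diam p^{-1}((z,c))\to 0$, so both arcs adjacent to any cut point can be made to have preimage of diameter $<\varepsilon$; and it forces $P:=\{y:\diam p^{-1}(y)\ge\varepsilon\}$ to be finite, since an accumulation point $y_{*}$ of $P$ would, after passing to a one-sided convergent subsequence $y_i\to y_{*}$ and to convergent sequences $u_i\to u_{*}$, $v_i\to v_{*}$ with $u_i,v_i\in p^{-1}(y_i)$ and $d(u_i,v_i)\ge\varepsilon$, produce two distinct points $u_{*}\ne v_{*}$ lying in the single one-sided limit set $L^{+}(y_{*})$.

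The heart of the proof is this single-point claim, which I would prove by contradiction. Suppose $u\ne v$ both lie in $L^{+}(z)$, and choose sequences $u_n\to u$, $v_n\to v$ whose images $p(u_n),p(v_n)$ approach $z$ strictly from the chosen side. Using local connectedness of the Peano continuum $X$, pick disjoint connected open neighbourhoods $V\ni u$ and $V'\ni v$. Then $p(V)$ is a connected subset of $S$ containing $z$ together with the points $p(u_n)$ accumulating at $z$ from that side, and any such connected set must contain a whole nondegenerate one-sided arc $(z,c^{*})$; similarly $p(V')\supseteq(z,c^{**})$. On the common arc $(z,c_0)$ every point then has a preimage in $V$ and a distinct preimage in $V'$, hence a nondegenerate preimage, contradicting the hypothesis that only countably many points of $S$ have nondegenerate preimage.

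With the single-point claim in hand the assembly is routine: put the finitely many points of $P$ into $K$, and for each $z\in P$ add two points, one on each side of $z$ and close enough to it that the adjacent open arcs have preimage of diameter $<\varepsilon$. The remaining set, $S$ with these finitely many small open arcs around the points of $P$ removed, is compact and disjoint from $P$, hence covered by finitely many open arcs each with preimage of diameter $<\varepsilon$; adjoining their endpoints to $K$ keeps $K$ finite, and any component $L$ of $S\setminus K$, being connected and avoiding $K$, lies inside one of these small-preimage arcs, so $\diam p^{-1}(L)<\varepsilon$. I expect the main obstacle to be precisely the single-point claim, i.e. the one-sided control of preimages near a point whose full preimage is large: since $p$ is not assumed monotone, $p^{-1}$ of an arc need not be connected and attempts to realise a large diameter inside one subcontinuum fail, and what makes the argument succeed is exactly the interplay between local connectedness (supplying the disjoint connected neighbourhoods) and the countability of the set of points with nondegenerate preimage (violated the moment two distinct one-sided limits occur).
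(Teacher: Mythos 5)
Your proof is correct, but it follows a genuinely different route from the paper's. The paper argues purely by contradiction: it enumerates a countable dense set $Q$ containing all points with nondegenerate fibre, uses K\"onig's lemma to extract a nested sequence of components $L_n$ of $S\setminus K_n$ with $\diam p^{-1}(L_n)\geq\varepsilon$, lets the closures $\cl(p^{-1}(L_n))$ shrink to a nondegenerate subcontinuum of a single fibre $p^{-1}(s)$, and only then plays the disjoint-connected-neighbourhood card. You instead localize the whole difficulty into the claim that the one-sided limit sets $L^{\pm}(z)$ are singletons, deduce from it both that $P=\{y:\diam p^{-1}(y)\geq\varepsilon\}$ is finite and that the arcs adjacent to each $z\in P$ have small preimage, and then assemble $K$ by an explicit compactness/covering argument (which works because every endpoint of every ``good'' arc is placed into $K$, so a component of $S\setminus K$ meeting a good arc cannot escape it). The decisive step is the same in both proofs --- two distinct points in one fibre, disjoint connected open neighbourhoods $V, V'$ supplied by local connectedness, and the observation that $p(V)\cap p(V')$ contains a nondegenerate arc, contradicting the countability of the set of nondegenerate fibres (you use uncountability of the arc where the paper picks a single point of $S\setminus Q$ in it; both work). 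Your version is more constructive and yields extra information (finiteness of $P$, one-sided degeneracy of fibre limits), at the price of slightly more bookkeeping; the paper's is shorter but leans on K\"onig's lemma and hyperspace convergence. One step you should make explicit: that a connected subset of $S$ containing $z$ and points $p(u_n)\to z$ from a fixed side must contain a one-sided arc $(z,c^{*})$ \emph{on that side} --- a priori it might contain only the complementary ``long'' arcs from $z$ to $p(u_n)$, but then it would contain their union, which is all of $S$, so the conclusion holds in either case. With that remark added, the argument is complete.
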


\begin{proof}
Let $Q$ be a countable dense subset of $S$ which contains the set $\{s\in S: \diam p^{-1}(s)>0\}$. Since $Q$ is countable we can write $Q=\{q_i:i\in\N\}$. Let $K_n=\{q_1,\dots,q_n\}$.
Assume for contradiction that there is $\varepsilon>0$ for which the conclusion does not hold. Then for every $n\in\N$, we can find a component $L_n$ of $S\setminus K_n$ for which $\diam(p^{-1}(L_n))\geq\varepsilon$. Since we have only finitely many choices for each $n\in\N$, we can even suppose using König's lemma that $L_1\supseteq L_2\supseteq\dots$. 
Clearly $\diam(L_i)\to 0$ by the density of $Q$. Thus there exists point $s\in S$ with $\{s\}=\bigcap \cl(L_n)$.

Let $Y_n=\cl(p^{-1}(L_n))$ and $Y=\bigcap Y_n$.
By the assumptions, $Y$ is an intersection of a decreasing chain of continua and hence it is a subcontinuum of $X$. Moreover $\diam(Y)\geq\varepsilon$ since $\diam(Y_n)\geq\varepsilon$ for every $n$. Further,
\[Y= \bigcap \cl (p^{-1}(L_n))\subseteq \bigcap p^{-1}(\cl(L_n))= p^{-1}(\bigcap \cl(L_n))=p^{-1}(s).\]

Thus $p^{-1}(s)$ is nondegenerate and hence $s\notin\bigcap L_n$.
Let $a, b$ be any two distinct points of $Y$. Since $X$ is a Peano continuum we may find disjoint connected open neighborhoods $U, V$ of $a, b$ respectively. Thus $p(U), p(V)$ are connected sets containing $s$.

Let $n$ satisfy $s \notin L_n$. Since $p^{-1}(L_n)$ is dense in $Y_n$, it follows that $U\cap p^{-1}(L_n)\neq\emptyset$ and $V\cap p^{-1}(L_n)\neq\emptyset$ and thus there exists $m\in\N$ for which $L_m\subseteq p(U)\cap p(V)$. Since $S\setminus Q$ is dense in $S$, there is a point $t\in L_m$ with $p^{-1}(t)$ degenerate. Consequently $p^{-1}(t)\subseteq U\cap V$ which implies that $U\cap V\neq\emptyset$, a contradiction.

\end{proof}

\begin{lemma}\label{LemmaMonotoneMapOntoCircle}
Let $S$ be a simple closed curve and $Q$ its countable dense set.
Let $X$ be a Peano continuum with uncountably many local cut points which are not cut points.
Then there is a continuous surjective map $p:X\to S$ for which $p^{-1}(y)$ is a continuum for every $y\in S$ and $p^{-1}(y)$ is nondegenerate if and only if $y\in Q$.
\end{lemma}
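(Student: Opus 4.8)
The plan is to mirror the construction in Proposition~\ref{structureprop}, but to let the non-cut-ness of the separating points bend the resulting arc into a circle. First I would fix a countable base $\mathcal B$ of nonempty connected open sets and, for triples $U,V,W\in\mathcal B$ with $\cl(U),\cl(V)\subseteq W$, consider the set $D(U,V,W)$ of points $x\in W$ that are local cut points of $X$ but not cut points of $X$ and that separate $U$ from $V$ inside $W$ (i.e.\ $x$ is a cut point of $W$ witnessed by $U,V$). Every local cut point that is not a cut point lies in some such $D(U,V,W)$, so by a pigeonhole argument over the countably many triples one of them is uncountable. Since $W$ is open and connected, it is arcwise connected by Theorem~\ref{arcwiseconnected}; fixing an arc $E\subseteq W$ from $U$ to $V$, the same argument as in Proposition~\ref{structureprop} shows $D(U,V,W)\subseteq E$ (any connected subset of $W$ joining $U$ to $V$ must meet $x$) and that it is closed in $E$. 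Discarding the countably many points of Menger order at least three (countable by \cite[Theorem 7]{Whyburn}; the remaining points separate $W$ into exactly two pieces by Observations~\ref{opencomponents} and \ref{Mengerorder}) and the endpoints of $E$, I extract a Cantor set $C$ of ``binary'' local-cut-not-cut points and an increasing homeomorphism $h:2^\omega\to C$ along $E$.

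Next, exactly as in Proposition~\ref{structureprop} but relative to $W$, for $s\in 2^\omega$ let $L_s,R_s$ be the components of $W\setminus\{h(s)\}$ containing $U,V$, and form the gap continua $X_s$, $s\in 2^{<\omega}$, with $X_s\cap C=\{h(s^\frown0\periodic1),h(s^\frown1\periodic0)\}$, together with the two ends $L,R$ near $h(\periodic0),h(\periodic1)$; by Observation~\ref{openconninPeano} these are Peano continua with finite boundaries, and $\diam X_s\to0$ by Lemma~\ref{diameters}. The essential new ingredient is that, since no $h(s)$ is a cut point of $X$, the complement of $W$ supplies a connection joining the $U$-side to the $V$-side, which closes the natural linear order on $C$ and its gaps into a cyclic one. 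Realizing the target as $S=[0,1]/(0\sim1)$ with the circular Cantor staircase $\bar c:2^\omega\to S$ (injective except that it identifies the two endpoints of each gap and the pair $\periodic0,\periodic1$), I would define $p:X\to S$ by $p=\bar c\circ h^{-1}$ on $C$, by collapsing each $X_s$ to the gap point $\bar c(h(s^\frown1\periodic0))$, and by collapsing the whole ``wrap-around'' piece --- the ends $L,R$ together with the material of $X\setminus W$ realizing the return connection --- to the basepoint $[0]$; any remaining points hanging off $E$ are absorbed into the fiber of their attaching point so that every fiber is a subcontinuum.

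For the verification I would use Lemma~\ref{continuous}, applied along the pieces $C$, the hanging sets and the gap continua, together with $\diam X_s\to0$ (Lemma~\ref{diameters}) and $\diam\bar c(\text{gap})\to0$, to conclude that $p$ is continuous; surjectivity and the fact that $C$, the $X_s$ and the wrap-around piece cover $X$ follow from an arc-joining argument as in the last paragraph of Proposition~\ref{structureprop}. By construction the only nondegenerate fibers are the gap continua $X_s$ and the single wrap-around piece, so $\{y:p^{-1}(y)\text{ nondegenerate}\}$ is exactly the countable dense set $G$ of gap points of $S$; post-composing with a self-homeomorphism of $S$ carrying $G$ onto the prescribed countable dense set $Q$ (any two countable dense subsets of the circle are interchangeable by a homeomorphism) yields the stated map. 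The main obstacle is making the cyclic structure precise: organizing $X\setminus W$ and the material hanging off $E$ into one controlled wrap-around continuum, using non-cut-ness essentially, so that the quotient is genuinely a circle and no stray points create nondegenerate fibers over $S\setminus Q$. Once this decomposition is in place, continuity, surjectivity and the exact fiber description are routine via Lemmas~\ref{continuous} and \ref{diameters}.
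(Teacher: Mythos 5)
Your overall route is the same as the paper's: extract a Cantor set $C$ of order-two local cut points that are not cut points, use the non-cut-ness to place the two ends of the arc in the same component of $X\setminus C$ so that the linear order on $C$ and its gaps closes into a cyclic one, collapse the complementary pieces to points to obtain a circle, and finish by composing with a self-homeomorphism of $S$ carrying the countable dense set of collapsed images onto $Q$ (the paper cites \cite[Theorem 1.6.9]{vanMill} for this last step). The only real difference is presentational: you build $p$ explicitly from a circular Cantor staircase, whereas the paper takes the quotient map of the decomposition $\mathcal P=\{\cl(K): K \text{ a component of } X\setminus C\}$ and invokes \cite[Theorem 9.31]{nadler} to recognize the quotient as a simple closed curve.

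However, the step you yourself flag as ``the main obstacle'' is a genuine gap, and it is precisely where the paper does its real work. Two ingredients are missing. First, you need that every component $K$ of $X\setminus C$ satisfies that $\cl(K)\setminus K$ is a two-point subset of $C$ consisting of the endpoints of a single complementary gap of $C$ (or of the two extreme points); this is what makes ``absorbing the material hanging off $E$ into the fiber of its attaching point'' well defined, guarantees every fiber is a continuum, and --- crucially --- prevents a fiber over a non-gap point of $C$ from picking up extra material and becoming nondegenerate, which would destroy the ``if and only if $y\in Q$'' clause. Second, you must rule out several ``large'' components of $X\setminus C$ escaping $W$: a priori many components could attach to interior gaps of $C$ while also containing points of $X\setminus W$, and your single ``wrap-around continuum'' is not obviously one set. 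The paper handles this by citing \cite[Exercise 8.34]{nadler} to get that only finitely many components of $X\setminus C$ are not contained in the basic open set, and then shrinks $C$ so that exactly one component --- the one containing both endpoints of the arc --- remains exceptional; Lemma \ref{diameters} then makes the decomposition upper semicontinuous and the quotient argument goes through. Without these two points your construction of $p$ is a plausible plan rather than a proof.
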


\begin{figure}[htp]
    \centering
    \includegraphics[width=8cm]{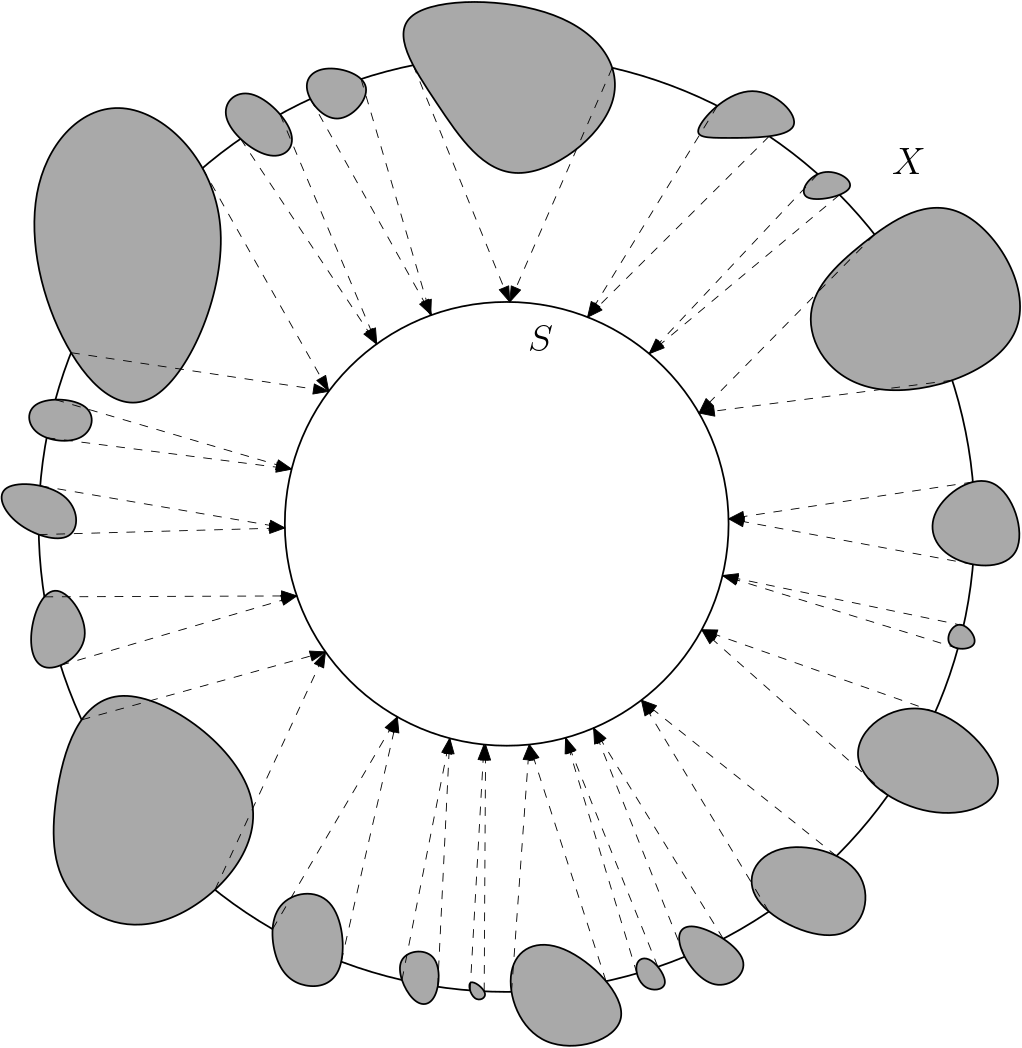}
    \caption{The map $p$}
    \label{fig:mapp}
\end{figure}

\begin{proof}
\textbf{Step 1.}
Let $\mathcal B$ be a countable base of open sets in $X$ formed by nonempty connected sets.
For $P,U,V \in \mathcal B$ with $U\cap V=\emptyset, U\cup V\subseteq P$ let $D(P,U,V) \subseteq X$ be the set of those points $x \in P$ such that there exist $A,B \subseteq P$ disjoint, open and satisfying $P\setminus\{x\}=A\cup B, U\subseteq A,V \subseteq B$. In other words, $D(P,U,V)$ is the set of those local cut points of $X$ that separates $U$ and $V$ in $P$.
It follows easily that 
\[\bigcup \{D(P,U,V);P,U,V \in \mathcal B, U\cup V\subseteq P\}\]
is the set of all local cut points of $X$. 
Thus there exist $P,U,V \in \mathcal B$ such that $D:=D(P,U,V)$ contains uncountably many local cut points which are not cut points.
By Theorem \ref{arcwiseconnected} there is an arc $A\subseteq P$ joining $U$ and $V$. Clearly $D\subseteq A$. Note that that the endpoints of $A$ lie in the same component of $X\setminus C$ since otherwise all points in $D$ would be cut points of $X$. By similar reasons as in Theorem \ref{thmunccutpoints}, there is a topological copy $C$ of the Cantor set in $D$ each point of which is a point of order two.

\textbf{Step 2.}
By \cite[Exercise 8.34]{nadler} only finitely many components of $X\setminus C$ are not contained in $P$. 
Hence we may suppose (possibly by making $C$ even smaller) that every component of $X\setminus C$ except the one which contains endpoints of $A$ is a subset of $P$.
Let $\mathcal P$ be the collection of $\cl (K)$ where $K$ is a component of $X\setminus C$. It follows that $\cl (K)\setminus K$ is a two point set and thus every element of $\mathcal P$ is a Peano continuum by Observation \ref{openconninPeano}.
Moreover the elements of $\mathcal P$ are pairwise disjoint and thus their diameter converges to zero by Lemma \ref{diameters}.

\textbf{Step 3.}
Let $q:X\to T$ be the quotient map of $X$ pushing every continuum $P\in\mathcal P$ to a point (see Figure \ref{fig:mapp}). Note that $T=q(C)$, where $q(s)=q(t)$ for $s,t\in C$ if and only if the subarc in $A$ with endpoints $s$ and $t$ intersects $C$ in $\{s,t\}$ or if it contains the whole set $C$. It follows that the quotient space $T$ is a simple closed curve \cite[Theorem 9.31]{nadler}.
Moreover the set $\{q(P): P\in\mathcal P\}$ is countable and dense in $T$.
By \cite[Theorem 1.6.9]{vanMill}, there is a homeomorphism $r:T\to S$ for which $r(\{q(P):P\in\mathcal P\})=Q$.
Finally let $p=r\circ q$.
We summarize that $p: X\to S$ is a continuous onto map for which $\{p(P):P\in\mathcal P\}=Q$.
\end{proof}

\begin{thm}\label{thmlocalcutpoint}
Let $X$ be a Peano continuum with uncountably many local cut points.
Then $X$ is a topological fractal and the witnessing number is at most three.
\end{thm}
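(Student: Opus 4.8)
The plan is to split according to whether the uncountably many local cut points of $X$ contain uncountably many genuine cut points. If they do, then $X$ has uncountably many cut points and Theorem~\ref{thmunccutpoints} already asserts that $X$ is a topological fractal with witnessing number two, which is at most three; so this case is immediate. I may therefore assume that uncountably many local cut points are \emph{not} cut points, which is exactly the hypothesis of Lemma~\ref{LemmaMonotoneMapOntoCircle}.

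In that case I would first fix a simple closed curve $S$ together with a conveniently chosen countable dense set $Q\subseteq S$ and apply Lemma~\ref{LemmaMonotoneMapOntoCircle} to obtain a continuous surjection $p:X\to S$ whose fibres $p^{-1}(y)$ are continua, nondegenerate precisely for $y\in Q$. By the construction behind that lemma these nondegenerate fibres are the closures of the components of $X\setminus C$ for a Cantor set $C\subseteq S$, hence pairwise disjoint Peano subcontinua whose diameters tend to zero by Lemma~\ref{diameters}; in particular, for each $\varepsilon>0$ only finitely many fibres have diameter at least $\varepsilon$. I would then use that the circle is a topological fractal witnessed by three contractions $\phi_1,\phi_2,\phi_3$ with $\bigcup_i\phi_i(S)=S$, and build three self-maps $f_1,f_2,f_3$ of $X$ that track $\phi_1,\phi_2,\phi_3$ in the $S$-direction. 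On the countably many nondegenerate fibres the lift has real freedom, and here I would use Theorem~\ref{coverings} to prescribe the maps between the relevant Peano continua so as to force the covering condition $\bigcup_i f_i(X)=X$.

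Continuity of each $f_i$ should then follow from Lemma~\ref{continuous}: the restriction to every fibre is continuous by design, the restriction to the Cantor part is continuous because it follows the circle dynamics, and the fibre diameters tend to zero. For contractivity the basic mechanism is that $p\bigl(f_{i_1}\circ\dots\circ f_{i_k}(X)\bigr)=\phi_{i_1}\circ\dots\circ\phi_{i_k}(S)$ is a short arc once $k$ is large, so that by Lemma~\ref{smallpreimages} — applied to the finite set that isolates the finitely many large fibres — the preimage of such an arc lying in a single complementary component is small. To assemble the three maps into a topologically contractive family I would organize the compositions through Observation~\ref{fractalmaps}, isolating the maps that eventually become constant, and, where a reduction is convenient, Observation~\ref{threetotwo}.

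The hard part will be the nondegenerate fibres, and reconciling two opposing demands on them. On one hand, a fibre-preserving (genuinely semiconjugate) lift cannot be topologically contractive, because the cylinders $\phi_w(S)$ cover $S$, so the largest fibre lies in some length-$k$ image for every $k$ and those images do not shrink; on the other hand, a lift that collapses every fibre cannot satisfy $\bigcup_i f_i(X)=X$, since then the bulk of each fibre is never covered. The remedy, mirroring the proof of Theorem~\ref{thmunccutpoints}, is to define the maps on the fibres in a \emph{shift-and-collapse} fashion along the addresses of $C$: each fibre is imaged wholesale only by pushing it onto a strictly deeper, hence smaller, fibre, with the overflow collapsed via a Cantor-stairs type map. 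This way a whole fibre can arise as a full image $f_w(X)$ only for words $w$ of bounded length, the chain of required preimages terminating exactly as in the cut-point construction, so that every sufficiently long composition has small image by Lemma~\ref{smallpreimages} and Lemma~\ref{diameters}. Verifying that this behaviour on the fibres is simultaneously continuous, covering, and contractive is the crux. Finally, three maps suffice here and, because of the circle, cannot in general be reduced to two, so the witnessing number is at most three.
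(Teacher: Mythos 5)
Your outline matches the paper's architecture: the same case split through Theorem~\ref{thmunccutpoints}, the semiconjugacy $p:X\to S$ from Lemma~\ref{LemmaMonotoneMapOntoCircle}, lifting three circle maps, and the same supporting results (Lemmas~\ref{diameters}, \ref{continuous}, \ref{smallpreimages}, Observations~\ref{fractalmaps}, \ref{threetotwo}). You also correctly isolate the central tension between covering (some nondegenerate fibre must arise as the full image of another fibre) and contractivity (a lift that is fibre-onto-fibre everywhere has $\lifted{f}(X)=p^{-1}(f(S))$, which never becomes small). But that tension is exactly what you leave unresolved --- you say so yourself --- and the ``shift-and-collapse onto strictly deeper, hence smaller, fibres'' you propose is not actually available: once you insist on $p\circ\lifted{\phi_i}=\phi_i\circ p$, the image of the fibre $p^{-1}(q)$ under $\lifted{\phi_i}$ is forced into $p^{-1}(\phi_i(q))$, so there is no freedom to redirect a fibre to a smaller one; and if you drop the semiconjugacy on fibres you lose precisely the control over long compositions that Lemma~\ref{smallpreimages} supplies. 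Moreover, fibre diameters are not monotone along the circle dynamics, so ``deeper'' does not mean ``smaller.''

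The paper closes this gap with two ingredients missing from your sketch. First, the circle maps are not generic witnesses: $\alpha,\beta,\gamma$ are built with an involution $\pi$ and the countable dense set $Q$ is taken to be the forward orbit $\{f(a):f\in\{\alpha,\beta,\gamma\}^n, n\geq 0\}$, arranged so that $Q=\alpha(Q)\cup\beta(Q)\cup\gamma(Q)$ and $\varphi^{-1}(q)\subseteq Q$ for all $q\in Q\setminus\{a,b\}$ (Claims~\ref{claimA} and~\ref{claimB}); only this choice of $Q$, fed into Lemma~\ref{LemmaMonotoneMapOntoCircle}, guarantees that nondegenerate fibres pull back to nondegenerate fibres, so that Theorem~\ref{coverings} yields onto lifts and the covering condition holds. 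Second, contractivity is not obtained by shrinking fibres along orbits but by a ``degenerate ends'' argument: for the suitably grouped maps one has $\lifted{f}(X)=\cl\bigl(p^{-1}(\int p(\lifted{f}(X)))\bigr)$, and each point of $Q$ lies in the interior of only finitely many cylinders $f(S)$, so a large fibre meets a long cylinder image only in an endpoint; Lemma~\ref{smallpreimages} then bounds $\diam\lifted{f}(X)$. Without constructing $Q$ compatibly with the dynamics and without this endpoint mechanism, the plan does not yet constitute a proof. (A minor point: your closing claim that three maps ``cannot in general be reduced to two'' for the circle is not established --- the paper leaves the witnessing number of the circle open --- but this does not affect the ``at most three'' statement.)
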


\begin{proof}
If the set of all cut points of $X$ is uncountable, we can use Theorem \ref{thmunccutpoints}. Hence assume that there are uncountably many local cut points which are not cut points and we will find three witnessing maps for $X$ in three steps.

\textbf{Step 1.}
Let us first consider the simple closed curve $S$ and let us find three maps $\alpha, \beta, \gamma$ with some special properties and  witnessing that $S$ is a topological fractal.
Let $a,b,c,d\in S$ be distinct points and $A, B, C, L, R$ be arcs in $S$ such that 
\begin{align*}
S=A\cup B\cup C, A\cap B=\{c\}, 
B\cap C=\{a\} \text{ and } C\cap A=\{b\}, \\ 
d\in C, L\cap R=\{c,d\}, A\subseteq L, B\subseteq R.
\end{align*}
Let $\pi$ be a homeomorphism of $S$ for which 
\begin{align*}
\pi=\pi^{-1}, \pi(c)=c, \pi(d)=d, \pi(a)=b \text{ and thus } \pi(A)=B \text{ and } \pi(L)=R.
\end{align*}
Let $\alpha:S\to A$ be any surjective map for which 
\begin{align*}
\alpha=\alpha\circ\pi, \alpha|_L \text{ is a homeomorphism and } \alpha(c)=b.
\end{align*}
Let $\beta=\pi\circ\alpha$.
Note that $\alpha$ maps the arc $L$ homeomorphically into itself and thus there
is a unique fix point $fix(\alpha)$. Similarly there is a unique fixed point $fix(\beta)$ for $\beta$ and $\pi(fix(\alpha))=fix(\beta)$.
Let $\gamma:S\to C$ be a surjective map with $\gamma=\gamma\circ\pi$, $\gamma^{-1}(b)=C$ and $\gamma^{-1}(a)=\alpha(C)\cup\beta(C)$. Further, choose $\gamma$ so that it maps both $\alpha(A)$ and $\beta(A)$ homeomorphically onto $C$ in such a way that $\gamma(fix(\alpha))=d$ and $\gamma|_E=\pi\circ\gamma\circ\alpha|_E$, where $E\subseteq A$ is the arc with the endpoints $fix(\alpha)$, $b$. 

Finally, let 
\[Q=\{f(a): f\in\{\alpha, \beta, \gamma\}^n, n\geq 0\}.\]
Then $\alpha(Q)\cup\beta(Q)\cup\gamma(Q)=Q$ since $\gamma \alpha (a) = a$.
We can easily observe that $c\notin Q$, since 
\[\bigcup\{f^{-1}(c): f\in \{\alpha, \beta, \gamma\}^n, n\geq 0\}=\{c, d, fix(\alpha), fix(\beta)\}.\]

\begin{claim} \label{claimA}
$\pi(Q)=Q$.
\end{claim} 
It is easy to observe that $\pi(Q\cap A)=Q\cap B$ since $\pi\circ\alpha=\beta$ and $\pi(a) = b= \gamma(a) \in Q$. Thus let $q\in Q\setminus(A\cup B)$, then necessarily $q=\gamma(f(a))$ for some $f\in\{\alpha, \beta, \gamma\}^n, n\geq 0$, such that $f(a)\in E\cup\pi(E)\cup \alpha(E)\cup \beta(E)$. Note that $\alpha(E)\subseteq A$ is the arc with the endpoints $\alpha(a)$, $fix(\alpha)$.
If $f(a)\in E\cup \pi(E)$, then $\pi(q)=\pi(\gamma(f(a)))=\pi(\pi(\gamma(\alpha(f(a)))))= \gamma(\alpha(f(a)))\in Q$. If $f(a)\in \alpha(E)$, resp. $f(a)\in\beta(E)$, then necessarily $f=\alpha \circ f'$, resp. $f=\beta\circ f'$, for some $f'\in \{\alpha,\beta,\gamma\}^{n-1}$, $f'(a)\in E\cup \pi(E)$. In any case, we obtain $q=\gamma(f(a))=\gamma(\alpha(f'(a)))$ since $\gamma\circ\beta = \gamma\circ\pi\circ\alpha=\gamma\circ\alpha$. Hence
\[\pi(q)=\pi(\gamma(\alpha(f'(a))))=\gamma(f'(a)) \in Q.\]

\begin{claim}\label{claimB}
Let $q \in Q\setminus \{a,b\}$ and $\varphi\in\{\alpha, \beta, \gamma\}$. Then $\varphi^{-1}(q)\subseteq Q$.
\end{claim}
Clearly $q\neq c$ as $c\notin Q$.
Consequently, there is unique $\phi\in \{\alpha,\beta,\gamma\}$ with $\phi^{-1}(q)\ne\emptyset$.
Thus we may assume that $q=\varphi(q')$ for some $q'\in Q$ . Note that $\varphi^{-1}(\varphi(s))=\{s,\pi(s)\}$ for every $s\in S$ if $\varphi\in \{\alpha,\beta\}$ and $\gamma^{-1}(\gamma(s))=\{s,\pi(s)\}$ if (and only if) $\gamma(s)\notin \{a,b\}$.
Therefore $\varphi^{-1}(q)=\varphi^{-1}(\varphi(q'))=\{q',\pi(q')\}$ since $q\notin \{a,b\}$.
Hence Claim \ref{claimA} gives $\varphi^{-1}(q)\subseteq Q$. 

Using a suitable metric on $S$ we may suppose that $\alpha, \beta, \gamma$ are contractions (e.g. if $\alpha, \beta,\gamma$ have piecewise constant slope and  $S$ is the circle with length 23 and the lengths of $A, B, C$ are 9, 9, 5 and the lenght of $\alpha(A)$ is 7 and the distance of $fix(\alpha)$ and $\alpha(a)$ is 3, see Figure \ref{fig:circle}). Consequently, $Q$ is a dense subset of $S$.

\begin{figure}[htp]
    \centering
    \includegraphics[width=10cm]{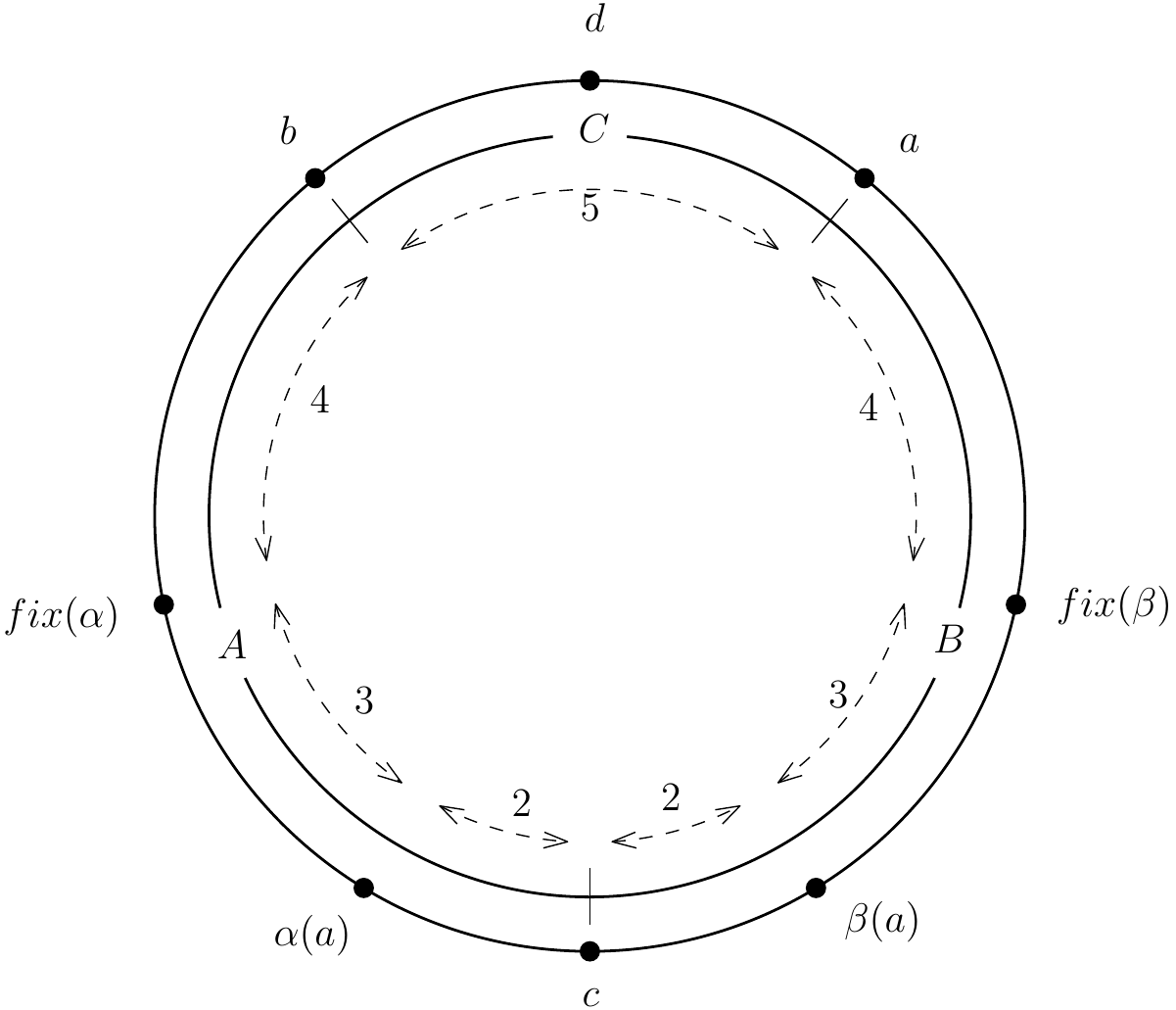}
    \caption{A suitable metric on $S$}
    \label{fig:circle}
\end{figure}

\textbf{Step 2.} 
By Lemma \ref{LemmaMonotoneMapOntoCircle} there exists a surjective map $p:X\to S$ for which $p^{-1}(y)$ is always a continuum and it is nondegenerate if and only if $y\in Q$.
Since by Theorem \ref{coverings} any non-degenerate Peano continuum may be continuously mapped onto any other with a prescription on finitely many points, using Claim \ref{claimB}
we may lift $\alpha, \beta, \gamma$ through the map $p$ to continuous 
maps $\lifted\alpha,\lifted\beta, \lifted\gamma: X\to X$ for which
\begin{align*}
p\circ\lifted\alpha=\alpha\circ p,  \quad
p\circ\lifted\beta=\beta\circ p,   \quad
p\circ\lifted\gamma=\gamma\circ p.
\end{align*}
(This is essentially using the same idea as in the Denjoy example \cite[Section 12.2]{KatokHasselblatt}.)
Moreover we may assume that 
\begin{align*}
\lifted\gamma|_{p^{-1}(C)}=s_b\circ p|_{p^{-1}(C)}, \\
\lifted\gamma|_{p^{-1}(\alpha(C)\cup\beta(C))} = s_a\circ p|_{p^{-1}(\alpha(C)\cup\beta(C))}, \\
\end{align*}
where $s_b:C\to p^{-1}(b)$ and $s_a:\alpha(C)\cup\beta(C)\to p^{-1}(a)$ are suitable continuous surjections (these exists since domains and ranges of $s_a, s_b$ are Peano continua).
Note that $\lifted\gamma|_P$ is constant if $p(P)\in C\cup\alpha(C)\cup\beta(C)$ for $P\in\mathcal P$.

\textbf{Step 3}. It remains to check that the system $\{\lifted{\alpha},\lifted{\beta}, \lifted{\gamma}\}$ is topologically contractive. 
Consider the following maps:
\begin{align*}
\lifted{f_1} &:= \lifted{\alpha}, &&\lifted{f_2} := \lifted{\beta}, 
&&\lifted{f_3} := \lifted{\gamma} \circ \lifted{\alpha} \circ \lifted{\alpha}, \\ \lifted{f_4} &:= \lifted{\gamma} \circ \lifted{\alpha} \circ \lifted{\beta},  &&\lifted{f_5} := \lifted{\gamma} \circ \lifted{\beta} \circ \lifted{\alpha}, &&\lifted{f_6} := \lifted{\gamma} \circ \lifted{\beta} \circ \lifted{\beta},
\\ 
\lifted{g_1} &:= \lifted{\gamma} \circ \lifted{\gamma},
&&\lifted{g_2}:= \lifted{\gamma} \circ \lifted{\alpha} \circ \lifted{\gamma}, &&\lifted{g_3}:= \lifted{\gamma} \circ \lifted{\beta} \circ \lifted{\gamma}.
\end{align*}

We claim that the maps above have the following three key properties. 
\begin{itemize}
    \item[(A)] Every sequence of length $3k$ in $\lifted{\alpha},\lifted{\beta},\lifted{\gamma}$ can be translated into sequence of length $k$ in $\lifted{f_1},\dots,\lifted{f_6},\lifted{g_1},\lifted{g_2},\lifted{g_3}$, starting from the left and possibly leaving out some maps in the end, i.e. on the right.
    \item[(B)] For all $1\leq i,j \leq 3$ and every $\lifted{f} \in \bigcup^\infty_{n=0}\left\{\lifted{f_1},\dots,\lifted{f_6}\right\}^{n}$ the composition $\lifted{g_i} \lifted{f} \lifted{g_j}$ is constant. 
    \item[(C)] $\lifted{f_1},\dots,\lifted{f_6}$ form a topologically contractive system.
\end{itemize}

Once we know that properties (B) and (C) hold, we can use Observation \ref{fractalmaps} to deduce that $\left\{\lifted{f_1},\dots,\lifted{f_6},\lifted{g_1},\dots,\lifted{g_3}\right\}$ is topologically contractive. Consequently, under (B) and (C), (A) entails the topological contractivity of the system $\{\lifted{\alpha},\lifted{\beta},\lifted{\gamma}\}$ by using the same idea as in Observation \ref{threetotwo}. We do not include the full proof of this fact, since it is technical and can be constructed in a manner closely mirroring the proof of Observation \ref{threetotwo}.


It is easy to observe that (A) holds.
To verify (B), fix $1\leq i,j \leq 3$ and $\lifted{f} \in \bigcup^\infty_{n=0}\left\{\lifted{f_1},\dots,\lifted{f_6}\right\}^{n}$. Since $\lifted{g_1}(X) = p^{-1}(b)$ and $\lifted{g_2}(X) \cup \lifted{g_3}(X)= p^{-1}(a)$, we have that $\lifted{g_j}(X) \subseteq P$ for some $P \in \mathcal P$. Since all the maps $\lifted{\alpha}, \lifted{\beta}, \lifted{\gamma}$ send every $P \in \mathcal P$ to another $P' \in \mathcal P$ or to a point, we may suppose that $\lifted{f} \lifted{g_j}(X) \subseteq P'$ for some $P' \in \mathcal P$. Thus it suffices to notice that $\lifted{g_i}$ is constant on $P'$ or in other words, that $\lifted{\gamma}$ is constant on $\lifted{\gamma}(P'), \lifted{\alpha}\lifted{\gamma}(P')$ and $\lifted{\beta}\lifted{\gamma}(P')$. This is immediate as these are either degenerate or satisfy $\lifted{\gamma}(P'), \lifted{\alpha}\lifted{\gamma}(P')$, $\lifted{\beta}\lifted{\gamma}(P') \in \mathcal P$, $p(\lifted{\gamma}(P')), p(\lifted{\alpha}\lifted{\gamma}(P'))$, $p(\lifted{\beta}\lifted{\gamma}(P'))\in C\cup \alpha(C)\cup \beta(C)$.


We proceed to prove (C). Let $Y\subseteq X$ be connected, we say that $Y$ has \emph{degenerate ends} if $Y=\cl (p^{-1}(\int p(Y)))$. This basically means that the intersections of $Y$ with the preimages of endpoints of $p(Y)$ are singletons. Observe that for every $Y\subseteq X$ connected and $1\leq i \leq 6$, if $Y$ has degenerate ends, then $\lifted f_i(Y)$ has degenerate ends as well.
Since $X$ has degenerate ends, we obtain by induction that $\lifted{f}(X)$ has degenerate ends for any $\lifted{f} \in \bigcup^\infty_{n=0}\{\lifted{f_1},\dots,\lifted{f_6}\}^n$.

Let $\varepsilon > 0$. 
By Lemma \ref{smallpreimages} there is a finite set $K\subseteq S$ such that for every component $L$ of $S\setminus K$ we have $\diam p^{-1}(L)<\varepsilon$.
Since $Q$ is dense in $S$ we can even suppose that $K\subseteq Q$.
Since $p$ is monotone the set $p^{-1}(L)$ is a component of $X\setminus p^{-1}(K)$ for any such $L$.
Note that every point of $Q$ is contained in the interior of only finitely many sets of the form $f(S)$, $f\in \bigcup^\infty_{n=0}\{\alpha, \beta, \gamma\}^{n}$.
Hence there is $n\in\mathbb N$ such that $K\cap \int (f(S))=\emptyset$ for every $m\geq n$ and $f\in \{\alpha, \beta, \gamma\}^m$.

Denote by $f_i: S\to S$ maps obtained from $\alpha,\beta,\gamma$ in the same way as $\lifted{f_i}: X\to X$ are obtained from $\lifted{\alpha},\lifted{\beta},\lifted{\gamma}$. Thus $f_1=\alpha$, $f_2=\beta$, $f_3=\gamma\circ\alpha\circ\alpha=f_4=f_5=f_6$ and moreover $f_i\circ p=p\circ \lifted{f_i}$.
Fix arbitrary $\lifted{f} \in \{\lifted{f_1},\dots,\lifted{f_6}\}^n$ and
consider the corresponding $f\in \{f_1,\dots,f_6\}^n$ with $f\circ p=p\circ\lifted f$.
Then $f\in\{\alpha,\beta,\gamma\}^m$ for some $m\geq n$ and hence $K\cap \int (f(S))=\emptyset$. 
The set $p^{-1}(\int (f(S)))$ is connected ($p$ is monotone and thus preimages of connected sets are connected) and hence it is a subset of some component of $X\setminus p^{-1}(K)$.
Consequently, $\diam p^{-1}(\int p(\lifted f(X)))=\diam p^{-1}(\int (f(p(X))))=\diam p^{-1}(\int (f(S)))<\varepsilon$.
Thus $\diam \lifted{f}(X)=\diam \cl (p^{-1}(\int p(\lifted f(X))))\leq \varepsilon$.
\end{proof}

\section{Self-regenerating fractals}

The notion of a self-regenerating fractal was introduced in \cite{regenerating}. Here are the definition and the main result from that article.

\begin{defn}
A compact metrizable space $A$ is called a self-regenerating fractal if for every
nonempty, open set $U\subset A$ there exists a finite family $\F$ of continuous self-maps of $A$, constant outside $U$, which witness that $A$ is a topological fractal.
\end{defn}

\begin{thm}\cite[Theorem 4.8]{regenerating}\label{thmnowak}
If a Peano continuum $X$ has $A \subset X$ a self-regenerating fractal with nonempty
interior, then $X$ is a topological fractal itself.
\end{thm}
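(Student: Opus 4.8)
The plan is to exploit the self-regenerating structure of $A$ on its interior, producing a topologically contractive family whose images cover $A$, and then to adjoin finitely many auxiliary ``collapsing'' maps that cover $X\setminus A$ without spoiling contractivity; the whole system is then assembled by Observation \ref{fractalmaps}. \textbf{Covering $A$.} First I would fix a nonempty open set $U$ with $\cl(U)\subseteq\int(A)$ (the interior taken in $X$) and apply the self-regenerating property to $U$, obtaining a finite family $f_1,\dots,f_n\colon A\to A$, each constant on $A\setminus U$, which is topologically contractive and satisfies $\bigcup_i f_i(A)=A$. Writing $c_i$ for the constant value of $f_i$ on $A\setminus U$, I would extend each $f_i$ to $\tilde f_i\colon X\to X$ by setting $\tilde f_i\equiv c_i$ on $X\setminus U$; since $\bd(U)\subseteq A\setminus U$ and $f_i$ already equals $c_i$ there, the two closed pieces $A$ and $X\setminus U$ agree on their overlap and $\tilde f_i$ is continuous by pasting. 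A short induction shows that any word $\tilde f_{i_1}\circ\dots\circ\tilde f_{i_k}$ has image exactly $(f_{i_1}\circ\dots\circ f_{i_k})(A)$, because each $\tilde f_i$ sends $X\setminus U$ into a single point of $A$ and restricts to $f_i$ on $A$. As topological contractivity depends only on the intrinsic topology of $A$, it follows that $\{\tilde f_1,\dots,\tilde f_n\}$ is topologically contractive on $X$ and that its images cover precisely $A$.

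\textbf{Covering $X\setminus A$.} Next I would cover the compact set $\cl(X\setminus A)$, which is disjoint from $\cl(U)$, by finitely many Peano subcontinua $D_1,\dots,D_m$, each contained in $X\setminus U$. Such a cover exists because $\cl(X\setminus A)$ is compact and lies at positive distance from $\cl(U)$, so one may cover it by connected sets of diameter less than $\dist(\cl(U),\cl(X\setminus A))$, none of which meets $U$, and the local connectedness of $X$ lets one take these pieces to be Peano subcontinua. For each $j$ I would then build a continuous $g_j\colon X\to X$ that is constant, say equal to some $p_j\in D_j$, on $X\setminus U$ and maps $\cl(U)$ onto $D_j$ with $\bd(U)$ sent to $p_j$; such a surjection exists by Theorem \ref{coverings} (after collapsing $\bd(U)$ to a single point). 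Then $g_j(X)=D_j\subseteq X\setminus U$ and $\bigcup_j g_j(X)\supseteq X\setminus A$, so the family $\{\tilde f_1,\dots,\tilde f_n,g_1,\dots,g_m\}$ covers $X$.

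\textbf{Contractivity.} Finally I would apply Observation \ref{fractalmaps} with the contractive family $\{\tilde f_1,\dots,\tilde f_n\}$ and the auxiliary maps $\{g_1,\dots,g_m\}$. The only thing to check is that $g_{j'}\circ f\circ g_j$ is constant for every $f\in\bigcup_{i\ge 0}\{\tilde f_1,\dots,\tilde f_n\}^i$. This is immediate from $g_j(X)\subseteq X\setminus U$: the map applied right after $g_j$ (namely $g_{j'}$ itself when $f=\mathrm{id}$, and the innermost $\tilde f_\ell$ when $f$ has length at least one) is constant on $X\setminus U$, hence collapses $g_j(X)$ to a point, and the whole composition with it. Observation \ref{fractalmaps} then yields that the system is topologically contractive, and since it covers $X$ it witnesses that $X$ is a topological fractal.

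\textbf{Main obstacle.} I expect the genuine difficulty to lie entirely in the middle step. Since $g_j(X)$ is a continuous image of the connected space $X$ it must be connected, so a single auxiliary map cannot cover a disconnected $X\setminus A$ (e.g.\ when $A$ sits in the middle of an arc); one is therefore forced to use several maps, and to realize each target piece $D_j$ as the image of a map that is constant outside $U$ one must collapse $\bd(U)$ and invoke the mapping theorem. This forces a careful choice of $U$ so that $\cl(U)$ is a Peano continuum and the collapse of $\bd(U)$ again yields a Peano quotient. Verifying that such a well-behaved $U$, and such finitely many Peano pieces $D_j$ inside $X\setminus U$, always exist in a general Peano continuum is precisely where the local theory of Peano continua (property $S$ and Observation \ref{openconninPeano}) must be used with care.
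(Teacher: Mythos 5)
The paper does not prove this statement at all: Theorem \ref{thmnowak} is quoted verbatim from M.~Nowak's article \cite[Theorem 4.8]{regenerating}, so there is no in-paper proof to compare yours against. Judged on its own, your argument is essentially correct and is the natural one: the pasting of each $f_i$ with the constant $c_i$ on $X\setminus U$ is legitimate ($A$ and $X\setminus U$ are closed, cover $X$, and the two definitions agree on $A\setminus U\supseteq\bd(U)$), the identity $\tilde f_{i_1}\circ\dots\circ\tilde f_{i_k}(X)=(f_{i_1}\circ\dots\circ f_{i_k})(A)$ does follow by induction (note $c_{i}\in f_{i}(A)$ since $A\setminus U\neq\emptyset$ when $A\neq X$), and the verification of the hypothesis of Observation \ref{fractalmaps} — that the innermost map of $g_{j'}\circ f\circ g_j$ is constant on $X\setminus U\supseteq g_j(X)$ — is exactly right. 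The finite cover of $\cl(X\setminus A)=X\setminus\int(A)$ by small Peano subcontinua missing $\cl(U)$ is also standard (property $S$ plus the fact that a Peano continuum is, for each $\varepsilon>0$, a finite union of Peano subcontinua of diameter less than $\varepsilon$).

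The one soft spot is the one you flag yourself, and it dissolves more easily than you fear. You do not need $\cl(U)$ to be a Peano continuum, nor do you need to analyze $\cl(U)/\bd(U)$: since $g_j$ is to be constant on all of $X\setminus U$, it factors through the quotient $X/(X\setminus U)$ obtained by collapsing the whole complement of $U$ to a point. The decomposition $\{X\setminus U\}\cup\{\{x\}:x\in U\}$ is upper semicontinuous, so this quotient is a compact metrizable continuum, and it is a continuous image of $X$, hence a Peano continuum by Theorem \ref{coverings}(\ref{peano5}); it is nondegenerate because $U$ is infinite. Theorem \ref{coverings}(\ref{peano6}) then gives a continuous surjection from this quotient onto $D_j$ sending the collapse point to $p_j$, and composing with the quotient map yields $g_j$. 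With that substitution no careful choice of $U$ is needed beyond $\cl(U)\subseteq\int(A)$, and your proof is complete.
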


Since the arc is a self-regenerating fractal \cite{regenerating}, the above result generalizes the fact that every Peano continuum with a free arc is a topological fractal. Yet Theorems \ref{thmunccutpoints} and \ref{thmlocalcutpoint} from the previous section, though being themselves generalizations of the fact that every Peano continuum with a free arc is a topological fractal, are independent of Theorem \ref{thmnowak}.

With Theorem \ref{thmnowak} in mind, M. Nowak formulated in her article two problems \cite[Problem 4.9, 4.10]{regenerating}. Slightly reformulated they follow.

\begin{question}\label{questionA}
Does every connected topological fractal contain a
self-regenerating fractal with nonempty interior?
\end{question}

\begin{question}\label{questionB}
Does every Peano continuum contain a self-regenerating fractal with nonempty interior?
\end{question}

Question \ref{questionA} asks whether the converse of Theorem \ref{thmnowak} is true, while the positive answer to Question \ref{questionB} together with Theorem \ref{thmnowak} would give us immediately positive answer to Hata's question. Using the main result proven in the previous section, we can show that these two questions of M. Nowak are actually equivalent. In fact, it should be clear that Question \ref{questionB} is stronger than Question \ref{questionA}, so we are only left to justify the converse. We will need the following lemma in the proof.

\begin{lemma} \label{regeneratingsub}
Let $Y$ be a Peano continuum and $r: Y \to A$ a monotone retraction onto $A\subseteq Y$ with finite boundary. Let $R\subseteq Y$ be a self-regenerating fractal and $R'$ a connected component of $A\cap R$ with nonempty interior in $Y$.
Then $R'$ is a self-regenerating fractal.
\end{lemma}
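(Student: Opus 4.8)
The plan is to fix an arbitrary nonempty open set $U'\subseteq R'$ and to manufacture a witnessing family of self-maps of $R'$ that are constant on $R'\setminus U'$, by transporting a witnessing family of $R$ through a retraction $\rho\colon R\to R'$ built from $r$. First I would record that $A=r(Y)$ is a Peano continuum, being a continuous image of the Peano continuum $Y$ (Theorem \ref{coverings}), so that $R'$ is a subcontinuum of a Peano continuum sitting inside $R$, and that $\int(R')\neq\emptyset$ provides a nonempty set $W=\int(R')\subseteq R'$ which is also open in $R$.

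The key construction is the retraction $\rho\colon R\to R'$. Using that $\partial A$ is finite and $r$ is monotone, the components $K$ of $R\setminus R'$ have closures meeting $R'$ in a uniformly bounded finite set, so by Observation \ref{openconninPeano} each $\cl(K)$ is a Peano continuum and by Lemma \ref{diameters} their diameters tend to zero; collapsing each $\cl(K)$ onto $\cl(K)\cap R'$ (sending $K$ continuously into the subcontinuum of $R'$ spanned by the finitely many attaching points, as provided by Theorem \ref{coverings}) and using Lemma \ref{continuous} to glue, I obtain a continuous retraction $\rho$ with $\rho(R)=R'$ and $\rho|_{R'}=\mathrm{id}$, which moreover sends $R\setminus R'$ into the finite boundary structure of $R'$.

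Next I would choose an open set $U\subseteq R$ adapted to $U'$: writing $U'=O\cap R'$ with $O$ open in $Y$, put $U=O\cap R$, so that $U$ is open in $R$ with $U\cap R'=U'$. Applying the self-regenerating property of $R$ to $U$ yields a finite, topologically contractive family $\F$ of self-maps of $R$ that are constant on $R\setminus U$ and satisfy $\bigcup_{f\in\F}f(R)=R$. I then set $g_f:=\rho\circ f|_{R'}\colon R'\to R'$ for $f\in\F$. These are finitely many continuous self-maps of $R'$, and each is constant on $R'\setminus U'$: for $x\in R'\setminus U'$ we have $x\in R\setminus U$ since $U\cap R'=U'$, whence $f(x)$ is the constant value of $f$ and $g_f(x)=\rho(f(x))$ is constant.

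It remains to verify the covering $\bigcup_f g_f(R')=R'$ and the topological contractivity of $\{g_f\}$, and this is where the main difficulty lies. When $U'$ meets the interior $W$ one can arrange $U\subseteq R'$ with $R'\setminus U\neq\emptyset$; then constancy of $f$ off $U$ forces $f(R')=f(R)$, so $\bigcup_f g_f(R')=\rho\bigl(\bigcup_f f(R)\bigr)=\rho(R)=R'$, and contractivity follows from that of $\F$ together with the uniform continuity of $\rho$. The genuine obstacle is the case of a $U'$ disjoint from $W$ (a \emph{thin} part of $R'$): there no open subset of $R$ lies inside $U'$, the equality $f(R')=f(R)$ fails, and naive transport no longer covers $R'$. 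I expect to resolve this by precomposing with a spreading map $\kappa\colon R'\to R$ that is constant off $U'$ and satisfies $\kappa(\cl(U'))\supseteq\cl(U)$ (which exists by Theorem \ref{coverings} once $\cl(U')$ is a nondegenerate Peano continuum), so that $g_f:=\rho\circ f\circ\kappa$ again covers; the price is that compositions now interleave the spreading maps, and topological contractivity must be recovered exactly as in Observations \ref{fractalmaps} and \ref{threetotwo}, using that $\rho$ collapses $R\setminus R'$ into the finite boundary of $R'$ so that products involving two spreading factors become constant. Controlling this interleaving, and hence proving contractivity of the transported system for thin $U'$, is the step I expect to be the crux.
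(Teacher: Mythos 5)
There are two genuine gaps. First, your retraction $\rho\colon R\to R'$ is not available: $R$ is only assumed to be a compact metrizable self-regenerating fractal, so it need not be connected (the paper's proof has to isolate the clopen component $V'$ of $R$ containing $R'$ for exactly this reason), the components $K$ of $R\setminus R'$ need not have closures meeting $R'$ at all, and Observation \ref{openconninPeano} and Lemma \ref{diameters} apply to subsets of the Peano continuum $Y$, not to subsets of $R$; so neither the finiteness of your attaching sets nor the decay of their diameters is justified. The paper sidesteps all of this by using the given retraction $r$ itself: it shows that $V'$, the component of $R$ containing $R'$, satisfies $r(V')=V'\cap A=R'$, and transports maps by $f\mapsto r\circ f|_{R'}$.

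Second --- and this is the step you yourself flag as unresolved --- the covering and contractivity for a ``thin'' $U'$ is exactly where your argument stops, and the idea that dissolves the difficulty is missing. Your claim that no open subset of $R$ lies inside such a $U'$ is false: since $V'$ is clopen in $R$, $R'=V'\cap A$, and $\bd(A)$ is finite, the set $U\cap\int(A)\cap V'$ (where $U'=U\cap R'$ with $U$ open in $Y$) is a nonempty subset of $U'$ that is open in $R$ ($U'$ is infinite because $R'$ is a nondegenerate continuum, while $\bd(A)$ is finite). Applying self-regeneration of $R$ to this set yields maps constant outside a subset of $U'\cap\int(A)$; because $r$ fixes $A$ and sends $Y\setminus A$ into $\bd(A)$, which is disjoint from $\int(A)$, one gets $f\circ r=f$ on $V'$, hence $(r f_1)\circ\cdots\circ(r f_k)=r\circ(f_1\circ\cdots\circ f_k)$ on $R'$, and both the covering and the topological contractivity follow at once from those of $\F$. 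Without such a commutation the interleaved compositions do not reduce to $\rho\circ(f_1\circ\cdots\circ f_k)$, so even in your ``easy'' case the appeal to uniform continuity of $\rho$ does not yield contractivity (your $\rho$ may send $R\setminus R'$ into $U'$, breaking $f\circ\rho=f$), and your proposed spreading maps $\kappa$ are never shown to satisfy the hypotheses of Observation \ref{fractalmaps}. As written, the proof is incomplete precisely at its crux.
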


\begin{proof}
We may assume that $A$ is nondegenerate. Since $Y$ is Peano, there are $W\subsetneq V\subseteq \int (R')$ nonempty, open and connected. Since $R$ is self-regenerating, there exist finitely many continuous selfmaps of $R$ constant outside $W$ whose images cover $R$. Since $V$ is connected, each of these images is connected, and thus $R$ is a union of finitely many connected sets. Denote by $V'$ the connected component of $V$ in $R$. Then $V'$ is clopen in $R$.

We want that $r(V') = R'$. First, $R'\subseteq V'$ since $V\subseteq R'\subseteq R$ and $R'$ is connected. Thus, since $R'\subseteq A$, we obtain $R' = r(R') \subseteq r(V')$.
For the opposite inclusion, observe first that for $a\in \int (A)$ we have $r^{-1}(a)\subseteq \{a\}\cup (Y\setminus A)$ just by $r|_A = id_A$. Thus the monotonicity of $r$ enforces $r^{-1}(a)= \{a\}$ for every $a \in \int (A)$, in other words $r(Y\setminus A) \subseteq \bd(A)$. Thus $V'\cap A\subseteq r(V') \subseteq (V'\cap A) \cup \bd(A)$, where $r(V')$ is connected, $V'\cap A$ is compact and $\bd(A)$ is finite. This implies that $r(V') = V'\cap A\subseteq R$. Finally, this gives us that $V\subseteq r(V') \subseteq A\cap R$ and thus $r(V')$, being a connected set, is a subset of $R'$.

Let $U' \subseteq R'$ be an arbitrary nonempty open set in $R'$. Then there exists $U \subseteq Y$ open such that $U'=U\cap R'$. Its subset $U\cap \int (A) \cap V'$ is nonempty and open in $R$ (it differs by only finitely many points). Moreover, we may suppose that it is a proper subset of $V'$.
Thus there exists $\F$ a set of continuous self--maps of $R$ constant outside $U\cap \int (A)\cap V'$ witnessing that $R$ is a topological fractal. Since for every $f \in \F$ either $f(V')\subseteq V'$ or $f(V') \cap V' = \emptyset$, after possibly leaving out some maps we may assume that $\bigcup \{f(V');\,f \in \F\} = V'$. Note that $\F':= \{f|_{R'};\,f \in \F\}$ is still topologically contractive and the images cover $V'$.
For any $f \in \F'$ let $f':= r\circ f$ and note that $f':R'\to R'$.

Let $\F'':= \{f';\,f \in \F'\}$. Then $\F''$ consists of continuous self--maps of $R'$, $\F''$ is topologically contractive and $R' = \bigcup \{f(R');\,f \in \F''\}$. Thus $\F''$ is a family of maps constant outside $U'$ witnessing that $R'$ is a topological fractal.
	 
\end{proof}

\begin{thm} \label{thmqA=qB}
Question \ref{questionA} has positive answer if and only if Question \ref{questionB} has positive answer.
In fact, even the following assertions are equivalent:
\begin{itemize}
\item Every Peano continuum which is a topological fractal contains a
self-regenerating fractal with nonempty interior.
\item Every Peano continuum contains a Peano self-regenerating fractal with nonempty interior. 
\end{itemize}
\end{thm}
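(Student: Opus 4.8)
The plan is to establish the cycle of implications (ii)~$\Rightarrow$~Question~\ref{questionB}~$\Rightarrow$~Question~\ref{questionA}~$\Leftrightarrow$~(i)~$\Rightarrow$~(ii), which at once yields both equivalences claimed in the theorem. Three of these links are immediate. Indeed, (ii) trivially implies Question~\ref{questionB}; a positive answer to Question~\ref{questionB} gives a positive answer to Question~\ref{questionA} because every connected topological fractal is a Peano continuum; and (i) is the same statement as Question~\ref{questionA}, since a Peano continuum which is a topological fractal is connected, while conversely every connected topological fractal has property $S$ and is therefore a Peano continuum (as recalled in the introduction). Thus the entire content is the implication (i)~$\Rightarrow$~(ii), and the tool I would use for it is Lemma~\ref{regeneratingsub}.

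Assume (i) and let $X$ be a Peano continuum; I may assume $X$ is non-degenerate, since otherwise $X$ itself is the required fractal. Fix two distinct points of $X$. The construction I would use is a \emph{Cantor set of copies of $X$}: take the standard Cantor set $C\subseteq[0,1]$ with complementary intervals $(a_j,b_j)$, $j\in\N$, and build $Y$ from $C$ by gluing into each gap a homeomorphic copy $X_j$ of $X$, identifying the two distinguished points of $X_j$ with $a_j$ and $b_j$; I metrize $Y$ so that $\diam X_j\to 0$. The claim is that $Y$ is then a Peano continuum in which every point of $C$ that is not an endpoint of a complementary interval is a cut point (removing it splits $Y$ into its left and right parts). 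There are uncountably many such points, so by Theorem~\ref{thmunccutpoints} the space $Y$ is a topological fractal.

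By (i), $Y$ contains a self-regenerating fractal $R$ with nonempty interior. Here the copies satisfy $X_j\cap X_k\subseteq C$ for $j\neq k$, each $X_j\setminus\{a_j,b_j\}$ is open in $Y$, and $C$ is nowhere dense in $Y$, since every neighbourhood of a point of $C$ meets some $X_j$. Hence the nonempty open set $\int(R)$ cannot lie inside $C$ and must meet $X_j\setminus\{a_j,b_j\}$ for some $j$. Now the map that is the identity on $X_j$ and collapses the part of $Y$ lying to the left of $a_j$ to the point $a_j$ and the part lying to the right of $b_j$ to the point $b_j$ is a monotone retraction of $Y$ onto $X_j$ whose boundary $\{a_j,b_j\}$ is finite. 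Therefore Lemma~\ref{regeneratingsub} applies: the component $R'$ of $X_j\cap R$ containing a point of $\int(R)\cap\bigl(X_j\setminus\{a_j,b_j\}\bigr)$ is a self-regenerating fractal, and it has nonempty interior. Being a connected self-regenerating fractal, $R'$ is a topological fractal, hence a Peano continuum; transporting it through the homeomorphism $X_j\cong X$ produces a Peano self-regenerating fractal with nonempty interior inside $X$, which is precisely~(ii).

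The main obstacle I anticipate lies entirely in the construction of $Y$: one must verify that $Y$ is genuinely a Peano continuum, the delicate point being local connectedness at the points of $C$, which should follow from $\diam X_j\to 0$ by an argument in the spirit of Lemma~\ref{continuous} and Observation~\ref{openconninPeano}, and one must check that the side-collapsing maps really are monotone retractions onto the $X_j$ with two-point boundary. Once $Y$ is established, the cut-point count, the nowhere-density of $C$, and the final invocation of Lemma~\ref{regeneratingsub} are routine bookkeeping.
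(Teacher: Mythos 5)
Your proposal is correct and follows essentially the same route as the paper: construct a ``Cantor set of copies of $X$'' with uncountably many cut points, invoke Theorem \ref{thmunccutpoints} and hypothesis (i) to get a self-regenerating fractal $R$ in that space, and pull a component of $R$ back into a single copy of $X$ via a two-point-boundary monotone retraction and Lemma \ref{regeneratingsub}. The only difference is that the paper realizes the auxiliary space $Y$ as an inverse limit with monotone bonding maps, which yields its compactness, metrizability and local connectedness for free (via \cite[Exercise 8.47]{nadler}), whereas your direct gluing of copies into the gaps of the Cantor set leaves exactly those verifications --- the obstacle you yourself flag --- to be carried out by hand.
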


\begin{proof}
The backward implication is trivial, so we are left to prove the forward one. Suppose that every Peano continuum which is a topological fractal contains a self-regenerating fractal as a subset with nonempty interior and let $X$ be an arbitrary Peano continuum. We may assume that $X$ is nondegenerate.

The idea of what we are going to do is, roughly speaking, to attach one copy of $X$ in between every pair of consecutive points in the Cantor set (consecutive with respect to the natural linear order on the Cantor set), the “smaller" copy the closer the points are, and in this way we obtain a connected topological fractal. However, we will carry out the construction via inverse limits.

Fix distinct points $a,b \in X$. For $s\in 2^{<\omega}$ let $X_s$ be a topological copy of $X$ and let $c_{s^\frown 0}, c_{s^\frown 1}\in X_s$ be the corresponding copies of points $a, b\in X$, respectively. Let us consider the strict linear order $\prec$ on $2^{<\omega}$ given by
\[s^{\frown}0^{\frown}u \prec s \prec s^{\frown}1^{\frown}v\]
for every $s, u, v\in2^{<\omega}$.
Let $Y_n$ be the quotient space of $\bigoplus\{ X_s: s\in 2^{<n}\}$, where points $c_{s^\frown 1}\in X_s$ and $c_{t^\frown 0}\in X_t$ are identified if $s\prec t$, $s, t\in2^{<n}$, and there is no $r\in 2^{<n}$ with $s\prec r\prec t$.
Let $f_n: Y_{n+1}\to Y_n$ be a map which maps $X_s\subseteq Y_{n+1}$ onto $X_s\subseteq Y_n$ by the identity if $|s|<n$ and which maps every point of $X_s\subseteq Y_{n+1}$ to the point $c_s\in Y_n$ if $|s|=n$.
Let $Y$ be the inverse limit of the inverse system $(Y_n, f_n)_{n=1}^\infty$, i.e. 
\[Y=\{(y_n)_{n=0}^\infty: y_n\in Y_n, f_n(y_{n+1})=y_n\}\]
and denote by $\pi_n:Y\to Y_n$ the natural projection.
Clearly, every $f_n$ is continuous and monotone and every $Y_n$ is a Peano continuum. Consequently, $Y$ is a Peano continuum by \cite[Exercise 8.47]{nadler}.

The points $z_s:=(c_{s|n})_{n\in\omega}$, $s\in 2^{\omega}$, all lie in $Y$. Moreover, they are pairwise distinct since in between any two points of $2^{n}$ there is a point in $2^{<n}$. Let us argue that if $s\neq \periodic 0, \periodic 1$, then $z_s$ is a cut point of $Y$. 
Consider the set
\[C:= \bigcap_{n\in\omega} \pi_n^{-1}\left(\bigcup\left\{ X_t: t\in2^n, t \prec s|_{n} \text{ or } t=s|_{n}\right\}\right).\]
Clearly, $C$ is closed and $z_{\periodic 0}\in C$.
We can define $D$ similarly, but with the opposite order of $\prec$. Then $D$ is closed, $z_{\periodic 1}\in D$ and $C\cup D=Y$.
Moreover 
\[C\cap D=\bigcap \pi_n^{-1}(X_{s|n}) = \bigcap \pi_{n-1}^{-1}(c_{s|n}) =  \{(c_{s|n})_{n\in\omega}\}=\{z_s\}.\]
Thus $z_s$ is a cut point of $Y$.

Thus there are uncountably many cut points in $Y$, hence by Theorem \ref{thmunccutpoints} the space $Y$ is a topological fractal and therefore, by our assumption, there is a self-regenerating fractal $R\subseteq Y$ with nonempty interior.
	
Thus there exists $n\in \N$ and a nonempty open set $U\subseteq Y_n$ such that $\pi_n^{-1}(U)\subseteq R$. We may suppose that there is $s\in 2^{<\omega}$ such that $U\subseteq X_s\setminus \{c_{s^\frown 0}, c_{s^\frown 1}\}$. Since $X_s$ is a Peano continuum we may also suppose that $U$ is connected. Let

\[A:= \{(x_{k})_{k\in\omega};\forall k\geq n: x_k = x_{k+1} \in X_s\},\]
\[V:= \{(x_{k})_{k\in\omega};\forall k\geq n: x_k = x_{k+1} \in U\}=\pi_n^{-1}(U).\]
Then $A\subseteq Y$ is homeomorphic to $X_s$ and $V\subseteq A \cap R$ is homeomorphic to $U$. In particular, $V$ is nonempty, connected and open. 
The set $X_s$ is a retract of every $Y_m$, $m\geq n$, in a canonical way, hence by
\cite[Exercise 2.22]{nadler} also $A$ is a retract of $Y$. Moreover it is easy to observe that the retraction is monotone as preimages of only two points of $A$ are nondegenerate.
The boundary of $A$ is $\{z_{s^\frown 0\periodic 1}, z_{s^\frown 1\periodic 0}\}$ and thus finite.

Let $R'$ be the connected component of $A\cap R$ which contains $V$.
By $V\subseteq R'$ we obtain $\int( R') \ne \emptyset$ and therefore $R'$ is a self-regenerating fractal by Lemma \ref{regeneratingsub}.
Since $R'$ is a connected topological fractal it is also a Peano continuum.
Thus $X_s$ contains a Peano self-regenerating fractal with nonempty interior.
Finally, since $X_s$ is homeomorphic to $X$, the same holds for $X$ and this concludes the proof.

\end{proof}

\begin{cor}
If Question \ref{questionA} has positive answer then every Peano continuum is a topological fractal.
\end{cor}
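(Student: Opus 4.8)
The plan is to derive this corollary by composing the equivalence of Theorem \ref{thmqA=qB} with Theorem \ref{thmnowak}, so essentially no new ideas are needed. First I would observe that the hypothesis---that Question \ref{questionA} has a positive answer---feeds directly into the forward direction of Theorem \ref{thmqA=qB}, yielding that Question \ref{questionB} also has a positive answer. Concretely, this gives the (a priori stronger) conclusion that every Peano continuum contains a \emph{Peano} self-regenerating fractal with nonempty interior, but for the present purpose the weaker statement that every Peano continuum contains \emph{some} self-regenerating fractal with nonempty interior already suffices.

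Next I would invoke Theorem \ref{thmnowak}. Let $X$ be an arbitrary Peano continuum. By the previous step there is a self-regenerating fractal $A \subseteq X$ with nonempty interior, and Theorem \ref{thmnowak} applies verbatim to this situation, producing the conclusion that $X$ is itself a topological fractal. Since $X$ was chosen arbitrarily among Peano continua, this establishes that every Peano continuum is a topological fractal, as desired.

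I do not expect any genuine obstacle here: the entire mathematical content resides in Theorem \ref{thmqA=qB} and Theorem \ref{thmnowak}, and the corollary is just their syllogistic combination. The only point deserving a line of care is to apply the \emph{correct} direction of the equivalence in Theorem \ref{thmqA=qB}---namely that a positive answer to Question \ref{questionA} forces a positive answer to Question \ref{questionB}---rather than the trivial backward implication; once this is stated, the chain hypothesis $\Rightarrow$ Question \ref{questionB} $\Rightarrow$ Theorem \ref{thmnowak} $\Rightarrow$ conclusion closes immediately.
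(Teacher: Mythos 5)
Your proposal is correct and is essentially identical to the paper's own proof: apply the forward direction of Theorem \ref{thmqA=qB} to pass from Question \ref{questionA} to Question \ref{questionB}, then conclude via Theorem \ref{thmnowak}. No further comment is needed.
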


\begin{proof}
If Question \ref{questionA} has positive answer, then Question \ref{questionB} has positive answer by Theorem \ref{thmqA=qB}. Thus, by Theorem \ref{thmnowak}, every Peano continuum is a topological fractal.
\end{proof}

\section{Final remarks and questions}

When studying topological fractals, it is natural to ask what is the least upper bound of wittnesing numbers for some classes of spaces. Clearly, the number $2$ is the least possible witnessing number for any nondegenerate space, which makes Theorem \ref{thmunccutpoints} optimal. However, it is not clear whether the number $3$ for Peano continua with uncountably many local points is optimal. In fact, the optimality question is non--trivial even for the simplest possible space --- the circle:

\begin{question}
What is the witnessing number of the simple closed curve?
\end{question}

Naturally, we can extend the preceding question into two natural directions: graphs and spheres. For (compact metrizable) topological graphs, the witnessing number is always either two or three. 

\begin{question}
Characterize those topological graphs with witnessing number two.
\end{question}

By simple geometric reasons we can argue that there is a family of $n+2$ maps witnessing that the $n$-dimensional sphere is a topological fractal. However, what is the witnessing number is unclear.

\begin{question}
What is the witnessing number of the $n$-dimensional sphere?
\end{question}

\begin{rem}
If $S=S^n$ is the unit $n$-dimensional sphere (with a metric inherited from $\mathbb R^{n+1}$) and $S=f_1(S)\cup\dots f_k(S)$ for weak contractions $f_1, \dots, f_k$, then $k\geq n+2$.
This is because if for contradiction $k\leq n+1$ then by the Lusternik–Schnirelmann theorem, 
at least one of the sets $f_i(S)$ contains a pair $(x,-x)$ of antipodal points for some $i\in\{1,\dots, k\}$. Then there are points $a, b\in S$ for which $f_i(a)=x, f_i(b)=-x$. Since $f_i$ is a weak contraction we get $2=\|x-(-x)\|=\|f_i(a)-f_i(b)\| < \|a-b\|\leq 2$ which is a contradiction.

However, note that the preceding argumentation relies on the fact that we have a fixed (nice) metric on $S$, so we can not directly conclude anything about the witnessing number of the $n$-dimensional sphere.
\end{rem}

\begin{rem}
Note that $[0,1]^n$ is a topological fractal with only two witnessing maps. Indeed, let
\[ f(x_1,\dots,x_n):= (x_2/2,x_3,\dots,x_n,x_1),\]
\[ g(x_1,\dots,x_n):= (1/2+x_2/2,x_3,\dots,x_n,x_1).\]
Then the family $\{f,g\}$ witness that $[0,1]^n$ is a topological fractal.
\end{rem}


In contrast to the last remark, the following question is still open.
\begin{question}
Is the Hilbert cube a topological fractal?
\end{question}

Dumitru examined topological fractals with two injective witnessing maps and observed that there are infinitely many such dendrites \cite[Remark 2.2]{Dumitru}. As we know by 
Corollary \ref{dendrites}, every dendrite is a topological fractal with witnessing number two, however the witnessing maps constructed in the proof of Theorem \ref{thmunccutpoints} are not injective.

Rim-finite continua (i.e. a continua having a base whose elements have finite boundaries) need not to have uncountably many local cut-points (e.g. the Sierpiński triangle). However, rim-finite continua are locally connected. Thus the following question seems to be natural in view of the fact that dendrites are rim-finite.

\begin{question}
Is every rim-finite continuum a topological fractal?
\end{question}

An \emph{absolute retract} is any retract of the Hilbert cube. Since every dendrite is embeddable into the plane \cite{CharatoniksDendrites}, the following result is a strengthening of Corollary \ref{dendrites}.

\begin{prop}
Every planar absolute retract is a topological fractal.
\end{prop}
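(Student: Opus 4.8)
The plan is to reduce the statement to two results already in the paper: Corollary~\ref{dendrites} (every dendrite is a topological fractal) and Theorem~\ref{thmnowak} (a Peano continuum containing a self-regenerating fractal with nonempty interior is a topological fractal). The reduction is driven by a dichotomy according to whether the absolute retract has nonempty interior when embedded in the plane. We may assume $X$ is nondegenerate, the one-point case being trivial.

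First I would record the topological input. A nondegenerate absolute retract is a contractible, locally connected continuum, hence a Peano continuum; fix an embedding $X\subseteq \R^2\subseteq S^2$. Since $X$ is contractible its first reduced \v{C}ech cohomology vanishes, so by Alexander duality in $S^2$ the complement $S^2\setminus X$ is connected; thus $X$ does not separate the sphere. Now I would split into two cases. Suppose the interior of $X$ in $\R^2$ is empty. I claim $X$ contains no simple closed curve: if $J\subseteq X$ were one, then by the Jordan--Schoenflies theorem $J$ splits $S^2$ into two open disks, and since $S^2\setminus X$ is connected and disjoint from $J$ it lies in one of them, forcing the other open disk to be a subset of $X$ and contradicting emptiness of the interior. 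Hence $X$ is a dendrite, and Corollary~\ref{dendrites} applies directly.

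In the complementary case the interior of $X$ in $\R^2$ is nonempty, so it contains a closed round disk $D$ whose planar interior is contained in $X$. That interior is open in $X$, so $D$ is a \emph{free $2$-cell}, i.e. a $2$-cell with nonempty interior in $X$. If one knows that the $2$-cell is a self-regenerating fractal, then Theorem~\ref{thmnowak} immediately yields that $X$ is a topological fractal, completing the proof.

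The main obstacle is therefore the separate lemma that the $2$-cell $[0,1]^2$ is a self-regenerating fractal; this is the two-dimensional analogue of the (already used) fact that the arc is self-regenerating. I expect the cleanest route is a direct construction: given a nonempty open $U\subseteq[0,1]^2$, choose a small closed disk inside $U$ and build finitely many continuous self-maps that are constant on $[0,1]^2\setminus U$, whose restrictions to the small disk reproduce (through a homeomorphism with the whole cell) a contractive IFS-type decomposition of $[0,1]^2$, arranged so that their images cover $[0,1]^2$ and that iterated compositions drive diameters to zero. The delicate point, exactly as in the one-dimensional case, is to make each map genuinely \emph{constant outside} $U$ while keeping the images covering and the family topologically contractive; an alternative would be to prove that self-regeneration is preserved under forming the product $[0,1]^2=[0,1]\times[0,1]$ from the arc, but there the subtlety is that a product map $f\times g$ need not be constant off a product box, so the localization has to be arranged jointly rather than coordinatewise.
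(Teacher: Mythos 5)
Your proposal is correct and follows the same overall strategy as the paper's proof: a dichotomy according to whether $X$ has empty interior in the plane, with the interior-free case reduced to Corollary~\ref{dendrites} and the other case reduced to Theorem~\ref{thmnowak} via a $2$-cell with nonempty interior. The one place you genuinely diverge is in how the first case is identified with the dendrite case. The paper cites the fact that a nowhere dense planar compactum is at most one-dimensional \cite[Theorem 1.8.11]{EngelkingDimension} together with the Charatonik--Charatonik characterization of one-dimensional planar absolute retracts as dendrites \cite[Theorem 1.2]{CharatoniksDendrites}, whereas you argue directly that a contractible planar continuum cannot separate $S^2$ (Alexander duality) and hence cannot contain a simple closed curve without one of the complementary Jordan domains being absorbed into $X$, contradicting emptiness of the interior. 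Your route is more self-contained, trading dimension theory for the Jordan--Schoenflies theorem, and it is sound; the supporting claim that an absolute retract is a Peano continuum holds because it is a continuous image of the Hilbert cube. Finally, both you and the paper leave the assertion that $[0,1]^2$ is a self-regenerating fractal without proof --- the paper dismisses it as ``easily seen'' while you flag it as the main remaining obstacle and only sketch a construction. Relative to the paper's own standard this is not a gap, but a fully self-contained write-up would indeed require carrying out the localization argument you outline (making the witnessing maps constant outside an arbitrary prescribed open set while keeping the images covering and the system topologically contractive).
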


\begin{proof}
Let $X\subseteq \R^2$ be an absolute retract. We distinguish two cases.
If $X$ is nowhere dense then $X$ is one-dimensional by \cite[Theorem 1.8.11]{EngelkingDimension} and thus it is a dendrite by \cite[Theorem 1.2]{CharatoniksDendrites}. Consequently, $X$ is a topological fractal by Corollary \ref{dendrites}.
If $X$ is not nowhere dense in $\R^2$ then $X$ contains a topological copy of $[0,1]^2$ with nonempty interior. Since $[0,1]^2$ is easily seen to be a self-regenerating fractal, we conclude by Theorem \ref{thmnowak} that $X$ is a topological fractal.
\end{proof}

Finally, the interesting results in the theory of topological fractals do not necessarily restrict to connected spaces only.
For example, among zero--dimensional spaces even complete characterization was provided in \cite{zerodimensional}.

\bibliographystyle{alpha}

\newcommand{\etalchar}[1]{$^{#1}$}

\end{document}